\let\arXiv\arxiv
\def\doi#1{ {\href{http://dx.doi.org/#1}
   {{\mdseries\ttfamily DOI}}}}
\def\lf{\left\lfloor}   
\def\rf{\right\rfloor}
\newcommand{\beq}{\begin{equation}}\newcommand{\eq}{\end{equation}}
\renewcommand{\tt}{ \tilde t}
\newcommand{\pa}{{\partial}}
\newcommand{\at}{{\tilde t}}
\newcommand{\ga}{\gamma}
\newcommand{\tG}{{\widetilde{G}}}
\newcommand{\tpsi}{{\widetilde{\psi}}}
\newcommand{\newsection}[1]
{\section{#1}\setcounter{theorem}{0} \setcounter{equation}{0}
\par\noindent}
\newtheorem{theorem}{Theorem}
\newtheorem{lemma}[theorem]{Lemma}
\newtheorem{proposition}[theorem]{Proposition}
\newcommand{\N}{{\mathbb N}}
\newcommand{\R}{{\mathbb R}}
\newcommand{\ang}{{\not\negmedspace\partial }}
\newcommand{\la}{\langle}
\newcommand{\ra}{\rangle}
\newcommand{\rs}{{\tilde r}}
\renewcommand{\S}{{\mathbb S}}
\newcommand{\M}{\mathcal M}
\newcommand{\weight}{{\Bigl(1-\frac{2M}{r}\Bigr)}}
\begin{document}

\title
{
Pointwise decay for semilinear wave equations on Kerr spacetimes
}

\author{Mihai Tohaneanu}
\address{Department of Mathematics, University of Kentucky, Lexington,
  KY  40506}
\email{mihai.tohaneanu@uky.edu}

\begin{abstract}
   In this article we prove pointwise bounds for solutions to the semilinear wave equation with integer powers $p\geq 3$ on Kerr backgrounds with small angular momentum and small initial data. We expect that the bounds proved in this paper are optimal.
 \end{abstract}

\maketitle

\tableofcontents

\newsection{Introduction}

\subsection{Statement of the result}

 In this paper we establish pointwise bounds for solutions to the semilinear wave equations on Kerr backgrounds. We consider the equation
\begin{equation}\label{nlwintro}
\Box_K \phi = \pm \phi^p, \qquad \phi |_{\at=0} = \phi_0, \qquad \tilde T \phi |_{\at=0} = \phi_1\ .
\end{equation}

Here the coordinate $\at$ is chosen so that the slice $\at=0$ is space-like and so that $\at=t$
away from the black hole, and $\tilde T$ is the future unit normal to $\at=0$ (see Section 2 for details). Moreover, $p\geq 3$ is any integer, $\Box_K$ denotes the d'Alembertian in the Kerr metric, and the initial data are smooth and supported in $\{|x| \leq R_1\}$ for some fixed (but arbitrary) $R_1$. We also assume that, for some fixed $N\gg 1$, we have that
\[
\|\phi_0\|_{H^{N+1}} + \|\phi_1\|_{H^N} \leq \varepsilon.
\]

Let $\kappa = \min\{2, p-2\}$. Let $\rs$ equal to $r$ on a compact region and to the Regge-Wheeler coordinate $r^*$ near infinity (see Section 2 for details). We will show that, for small enough $\varepsilon$,
\beq\label{main}
\phi \lesssim \frac{\varepsilon}{\la \at\ra \la \at-\rs\ra^\kappa},
\eq
where $\la x\ra = \sqrt{2+|x|^2}$. 

In future work, we will also show that this rate of decay is sharp.

\subsection{History}

The semilinear wave equation in $\R^{1+3}$
\[
\Box \phi = \pm \phi^p, \qquad \phi |_{t=0} = \phi_0, \qquad \pa_t \phi |_{t=0} = \phi_1
\]
has been studied extensively. There are many papers concerned with global existence, uniqueness, and scattering, see for example \cite{Jor}, \cite{Str}, \cite{Pec2}, \cite{ShStr}, \cite{BSh}, \cite{Gri}. In particular, it is well-known that for small initial data there is a unique global solution if $p> 1+\sqrt{2}$, see \cite{John}, \cite{GLS}, \cite{Ta}.

In terms of pointwise decay of solutions, there are a number of results, see for example \cite{Pec1}, \cite{Str}, \cite{Yang}. In the case of compactly supported smooth data, the optimal decay rate is
\[
\phi \lesssim \frac{1}{\la t\ra \la t-r\ra^{p-2}} \ .
\]
This was shown for small initial data in \cite{Sz} and for large data in the defocusing case in \cite{Gri} (when $p=5$) and \cite{BSz} (when $3\leq p < 5$).

There is also a vast literature concerned with establishing pointwise decay estimates for solutions to the linear wave equation $\Box_g \phi = 0$ for various Lorentzian metrics $g$. In the case of the Schwarzschild metric, the solution to the wave equation was conjectured to decay at the rate of $\tt^{-3}$ on a compact region by Price \cite{Pri}, and this rate of decay was shown to hold for a variety of spacetimes, including Schwarzschild and Kerr with $|a|<M$, see \cite{DSS}, \cite{Tat}, \cite{MTT}, \cite{LOh}, \cite{Hin}, \cite{AAG3}. It is by now well understood that once local energy estimates  is established in a compact region on an asymptotically flat region, one can obtain pointwise decay rates that are related to how fast the metric coefficients decay to the Minkowski metric; see, for example, \cite{Tat}, \cite{MTT}, \cite{OS}, \cite{Mos}, \cite{AAG1}, \cite{AAG2}, \cite{Mor}, \cite{MW}, \cite{Looi1}.

Much less is known of the behavior of solutions of the semilinear wave equations with power nonlinearity on Schwarzschild and Kerr backgrounds. For small initial data, global existence and pointwise decay rates of $t^{-1}$ were shown for spherically symmetric solutions in Schwarzschild in the case $p>4$ in \cite{DR1}; a similar result was shown the case $p>3$ (without the spherically symmetric assumption) in \cite{BSt}. For Kerr with small angular momentum and $p>1+\sqrt{2}$, global existence was shown in \cite{LMSTW}. 

The goal of this paper is to establish sharp decay rates for solutions to \eqref{nlwintro} with small initial data. For simplicity, we also pick the initial data to be compactly supported, though one can do away with it assuming enough decay in weighted Sobolev spaces. The ideas of the proof also apply to other metrics and nonlinearities; in particular, see the upcoming result of Looi \cite{Looi2}, where a similar result for the quintic defocusing nonlinearity on perturbations of Minkowski (and large initial data) is shown. 

The paper is structured as follows. In Section 2 we introduce the Kerr metric, our preferred coordinates, local energy estimates, and our main theorem. Section 3 is dedicated to proving local energy estimates for our nonlinear problem. These estimates imply an initial pointwise decay rate of $\frac{(t-r)^{1/2}}{t}$, which is insufficient as a starting point for $p\leq 5$. Section 4 supplements this with $r^p$ estimates, which give better weighted $L^2$ estimates. Section 5 rephrases the problem in a more convenient way. Section 6 contains the proof of the main lemma used to improve the pointwise decay. Section 7 uses the results of the previous sections to yield an initial pointwise decay of $\frac{1}{r(t-r)^{\delta}}$. Section 8 obtains improved decay rates for the solution in the interior, and for derivatives. Section 9 is the main bootstrap argument: starting from the decay estimate of Section 7, we use the results of Sections 6 and Section 8 to improve the decay rate to the optimal one.

\subsection{Acknowledgments}

The author would like to thank Hans Lindblad, Sung-Jin Oh and Shi-Zhuo Looi for many useful conversations regarding the paper, and the Korea Institute for Advanced Study for their hospitality during the spring of 2019. The author was partly supported by the Simons collaboration Grant 586051.
 
 \medskip
\newsection{The Kerr metric and local energy norms}

\subsection{The Kerr metric}

The Kerr geometry in Boyer-Lindquist coordinates is given by
\[
ds^2 = g^K_{tt}dt^2 + g_{t\phi}dtd\phi + g^K_{rr}dr^2 + g^K_{\phi\phi}d\phi^2,
 + g^K_{\theta\theta}d\theta^2
\]
 where $t \in \R$, $r > 0$, $(\phi,\theta)$ are the spherical coordinates
on $\S^2$ and
\[
 g^K_{tt}=-\frac{\Delta-a^2\sin^2\theta}{\rho^2}, \qquad
 g^K_{t\phi}=-2a\frac{2Mr\sin^2\theta}{\rho^2}, \qquad
 g^K_{rr}=\frac{\rho^2}{\Delta},
 \]
\[ g^K_{\phi\phi}=\frac{(r^2+a^2)^2-a^2\Delta
\sin^2\theta}{\rho^2}\sin^2\theta, \qquad g^K_{\theta\theta}={\rho^2},
\]
with
\[
\Delta=r^2-2Mr+a^2, \qquad \rho^2=r^2+a^2\cos^2\theta.
\]

 Here $M$ represents the mass of the black hole, and $aM$ its angular momentum.

 A straightforward computation gives us the inverse of the metric:
\[ g_K^{tt}=-\frac{(r^2+a^2)^2-a^2\Delta\sin^2\theta}{\rho^2\Delta},
\qquad g_K^{t\phi}=-a\frac{2Mr}{\rho^2\Delta}, \qquad
g_K^{rr}=\frac{\Delta}{\rho^2},
\]
\[ g_K^{\phi\phi}=\frac{\Delta-a^2\sin^2\theta}{\rho^2\Delta\sin^2\theta}
, \qquad g_K^{\theta\theta}=\frac{1}{\rho^2}.
\]

The case $a = 0$ corresponds to the Schwarzschild space-time.  We shall
subsequently assume that $a$ is small $0 < a \ll M$, so that the Kerr
metric is a small perturbation of the Schwarzschild metric. Note also that the coefficients depend only $r$ and $\theta$ but are independent of
$\phi$ and $t$. 

We denote the Kerr metric by $g_K$, and the Schwarzschild metric by $g_S$. Let
$\Box_{K} $ and $\Box_{S} $ be the d'Alembertian associated to the Kerr and Schwarzschild metric, respectively. Similarly $dV_K$ and $dV_S$ are the volume forms, and $d\Sigma_K$ and $d\Sigma_S$ are the restrictions of the volume form to a hypersurface.

In the above coordinates the Kerr metric has singularities at $r = 0$,
on the equator $\theta = \pi/2$, and at the roots of $\Delta$, namely
$r_{\pm}=M\pm\sqrt{M^2-a^2}$. To remove the singularities at $r = r_{\pm}$ we
introduce functions $r_K^*=r_K^*(r)$, $v_{+}=t+r_K^*$ and $\phi_{+}=\phi_{+}(\phi,r)$ so that (see
\cite{HE})
\[
 dr_K^*=(r^2+a^2)\Delta^{-1}dr,
\qquad
 dv_{+}=dt+dr_K^*,
\qquad
 d\phi_{+}=d\phi+a\Delta^{-1}dr.
\]

Note that when $a=0$ the $r_K^*$ coordinate becomes the Schwarzschild Regge-Wheeler coordinate
\[
r^*=r+2M\log(r-2M).
\]

The Kerr metric can be written in the new coordinates $(v_+, r, \phi_+, \theta)$
\[
\begin{split}
ds^2= &\
-(1-\frac{2Mr}{\rho^2})dv_{+}^2+2drdv_{+}-4a\rho^{-2}Mr\sin^2\theta
dv_{+}d\phi_{+} -2a\sin^2\theta dr d\phi_{+} +\rho^2 d\theta^2 \\
& \ +\rho^{-2}[(r^2+a^2)^2-\Delta a^2\sin^2\theta]\sin^2\theta \,
d\phi_{+}^2,
\end{split}
\]
which is smooth and nondegenerate across the event horizon up to but not including
$r = 0$. We introduce the function
\[
\tt = v_{+} - \mu(r),
\]
where $\mu$ is a smooth function of $r$. In the $(\tt,r,\phi_{+},
\theta)$ coordinates the metric has the form
\[
\begin{split}
ds^2= &\ (1-\frac{2Mr}{\rho^2}) d\tt^2
+2\left(1-(1-\frac{2Mr}{\rho^2})\mu'(r)\right) d\tt dr \\
 &\ -4a\rho^{-2}Mr\sin^2\theta d\tt d\phi_{+} + \Bigl(2 \mu'(r) -
 (1-\frac{2Mr}{\rho^2}) (\mu'(r))^2\Bigr)  dr^2 \\
 &\ -2a (1+2\rho^{-2}Mr\mu' (r))\sin^2\theta dr d\phi_{+} +\rho^2
 d\theta^2 \\
 &\ +\rho^{-2}[(r^2+a^2)^2-\Delta a^2\sin^2\theta]\sin^2\theta
d\phi_{+}^2.
\end{split}
\]

On the function $\mu$ we impose the following two conditions:

(i) $\mu (r) \geq  r^*$ for $r > 2M$, with equality for $r >
{5M}/2$.

(ii)  The surfaces $\tt = const$ are space-like, i.e.
\[
\mu'(r) > 0, \qquad 2 - (1-\frac{2Mr}{\rho^2}) \mu'(r) > 0.
\]
As long as $a$ is small, we can use the same
function $\mu$ as in the case of the Schwarzschild space-time in \cite{MMTT}.

Let $\tilde r$ denote a smooth strictly increasing function (of $r$) that equals $r$ for $r\leq R$ and $r^*$ for $r\geq 2R$ for some large $R$. We will use the coordinates $(\tt, x^{i})$, where $x^i =  \rs\omega$. We use Latin
indices $i,j=1,2,3$ for spatial summation and Greek indices $\alpha,\beta=0,1,2,3$ for space-time summation.  By $\la r\ra$ we denote a smooth radial function which agrees with $r$ for
large $r$ and satisfies $\la r \ra \geq 1$. Note that, since $r\approx \rs$, we can use $r^k$ and $\rs^k$ interchangeably when defining our spaces of functions.

We fix $r_e$ satisfying $r_-<r_e<r_+$. The choice of $r_e$ is unimportant,
and for convenience we may simply use $r_e=M$ for all Kerr metrics
with $a/M\ll 1$. Let $\M =  \{ \tt \geq 0, \ r \geq r_e \}$, $ \Sigma(T) =  \M \cap \{ \tt = T \}$, and $ \Sigma^- =  \Sigma(0)$. 

A major difficulty in understanding dispersion properties for solutions to the linear wave equation on $\M$ is the presence of trapped null geodesics (i.e. null geodesics which do not escape either to infinity or to the singularity). One family of such geodesics occurs at the event horizon; however, due to the red-shift effect, the energy decays exponentially (in the high-frequency approximation) along such rays. A second family of such geodesics occurs in the compact region $|r-3M| \lesssim |a|$. Here the energy does not decay exponentially, but due to the hyperbolic nature of the trapping, it disperses after a time proportional to the logarithm of the frequency, and thus space-time estimates can still be recovered, albeit with a loss of derivatives.

Our favorite sets of vector fields will be
\[
\partial = \{ \partial_{\tt}, \partial_i\}, \qquad \Omega = \{x^i \partial_j -
x^j \partial_i\}, \qquad S = \tt \partial_{\tt} + \tilde r \partial_{\tilde r},
\]
namely the generators of translations, rotations and scaling. We set
$Z = \{ \partial,\Omega,S\}$.

For a triplet $\alpha=(i, j, k)$ we define $|\alpha| = i + 3j + 9k$ and
\[
u_{\alpha} = \pa^i \Omega^j S^k.
\]

The numerology is borrowed from \cite{MTT}, and takes into account the loss of derivatives that occurs when applying weak local energy estimates to vector fields.

We define the classes $S^Z(r^k)$ of
functions in $\R^+ \times \R^3$ by
\[
 f \in S^Z(r^k) \Longleftrightarrow 
|Z^j f(t, x)| \leq c_{j} \la r\ra^{k}, \quad j \geq 0.
\]
By $S^Z_{rad}(r^k)$ we denote spherically symmetric functions in $S^Z(r^k)$, and by $S^Z_{der}(r^k)$ the space of functions so that
\[
 f \in S^Z(r^k), \quad \pa f \in S^Z(r^{k-1}).
\]

Given a vector $g$, we will also use the notation
 \[
 f \in S^Z(r^k) g
 \]
 to mean that 
 \[
 f = \sum h_i g_i, \quad h_i\in S^Z(r^k),
 \]
 and similarly for $S^Z_{rad}(r^k)$, $S^Z_{der}(r^k)$.
 
In particular, a quick computation yields that
\beq\label{KSdiff}
g_K^{\alpha\beta} - g_S^{\alpha\beta} \in S^Z_{der}(r^{-2}).
\eq
Note that due to \eqref{KSdiff} we also have
\beq\label{K-Sc}
\Box_{g_K} u- \Box_{g_S} u \lesssim r^{-2} (|\pa^2 u| + |\pa u|).
\eq

The main theorem of the paper is the following:
\begin{theorem}\label{mainthm}
Let $p\geq 3$ be an integer. Assume that $\phi$ solves the wave equation
\begin{equation}\label{nlw}
\Box_K \phi = \pm \phi^p, \qquad \phi |_{\Sigma^-} = \phi_0, \qquad \tilde T \phi |_{\Sigma^-}= \phi_1\ .
\end{equation}
Let $\kappa = \min\{2, p-2\}$. Fix $m\in\N$ and $R_1 > r_e$. Then there are $N \gg m$ and $\varepsilon> 0$ so that, for any initial data $(\phi_0, \phi_1)$ supported in $\{r_e\leq r\leq R_1\}$ with
\[
\|\phi_0\|_{H^{N+1}} + \|\phi_1\|_{H^N} \leq \varepsilon,
\]
then $\phi$ exists globally in $\M$ and satisfies the pointwise bounds
\beq\label{mainbd}
\phi_{\leq m} \lesssim \frac{\varepsilon}{\la \at\ra \la \at-\rs\ra^\kappa} \ .
\eq
\end{theorem}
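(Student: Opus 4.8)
The plan is to run a continuity/bootstrap argument in which the solution is assumed to obey the target pointwise bound (up to the right number of derivatives and with a slightly worse constant or slightly worse exponent that will be improved), feed that assumption into the nonlinearity $\pm\phi^p$, and use the linear machinery available on $\M$ — local energy decay, $r^p$-weighted estimates, and the pointwise-decay mechanism — to recover the claimed bound with room to spare. The first step is local existence and propagation of regularity: standard energy estimates on the spacelike slices $\Sigma(T)$, together with the fact that the coefficients of $g_K$ are a small perturbation of $g_S$ (so the characteristic surface is the event horizon, which is outgoing from $\M$), give a local solution in $H^{N+1}\times H^N$ and reduce global existence to an a priori pointwise/energy bound. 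I would set up the bootstrap quantity to control $\phi_{\le m'}$ for some intermediate $m < m' \ll N$ in both the weighted-$L^2$ norms and the pointwise norm $\la\at\ra\la\at-\rs\ra^{\kappa}$.

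The core of the argument proceeds in the stages flagged in the introduction, and I would carry them out in that order. \emph{First}, local energy estimates for the nonlinear problem (Section 3): commuting \eqref{nlw} with the vector fields $Z$ and absorbing the trapping loss via the $|\alpha|$-numerology borrowed from \cite{MTT}, one gets $\LE$-type bounds for $\phi_\alpha$ with the nonlinear source $\partial^{\le1}(\phi^p)$ treated perturbatively — this is where smallness of $\varepsilon$ and the bootstrap assumption combine, since $\|\phi^{p-1}\partial\phi\|$ is controlled by the pointwise smallness of $\phi^{p-1}$ times the energy of $\partial\phi$. The resulting decay $\tfrac{(t-r)^{1/2}}{t}$ is too weak for $p\le5$, so \emph{next} I would run the $r^p$ (Dafermos–Rodnianski) hierarchy near null infinity (Section 4) to upgrade to genuinely weighted $L^2$ bounds, then feed these into the Section 6 pointwise-decay lemma to obtain the preliminary rate $\tfrac{1}{r\la t-r\ra^{\delta}}$ (Section 7). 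The \emph{final} and decisive stage (Section 9) is the iteration: insert the current pointwise bound into $\phi^p$, note that along $\{t\approx r\}$ one gains $\la t-r\ra^{-\kappa(p-1)}$ and in the interior one gains from the Section 8 interior-decay estimates, re-run the linear estimate, and check that the exponent of $\la\at-\rs\ra$ improves by a fixed amount each pass and saturates exactly at $\kappa=\min\{2,p-2\}$ — the $2$ coming from the intrinsic Price-law ceiling on $\M$ and the $p-2$ from how fast the nonlinear source decays relative to a free wave.

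**The main obstacle** I anticipate is the borderline nature of the case $p=3$ (where $\kappa=1$): the nonlinearity $\phi^3$ decays only like $\la t\ra^{-3}\la t-\rs\ra^{-3}$ near the light cone, which is exactly the Price-law threshold, so the source term in the iteration is critically — not super-critically — integrable, and one must track logarithms carefully and exploit the structure of the source (e.g.\ that $\phi^p$ is supported where $\phi$ is, and the null-coordinate weights line up) to avoid a genuine log loss in \eqref{mainbd}. A secondary technical point is the interplay between the trapped set near $r=3M$ (derivative loss, hence the need for $N\gg m$) and the weighted estimates near infinity: one must make sure the number of commutator fields spent on trapping plus those spent on the $r^p$-hierarchy and on converting $L^2$ to $L^\infty$ stays below $N$, which is exactly why the theorem only asserts control of $\phi_{\le m}$ for prescribed $m$ with $N=N(m)$ chosen afterward. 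Handling the sign-indefinite nonlinearity uniformly (both $+$ and $-$) is not an obstacle here precisely because the whole argument is perturbative in $\varepsilon$ and never uses the defocusing energy.
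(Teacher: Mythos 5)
Your outline matches the paper's architecture at the level of stages --- local energy decay, then $r^p$-weighted estimates, then a preliminary pointwise rate, then iteration to the sharp rate --- but you deviate in one structural place, and a few of your claims about the mechanism are imprecise enough to be worth flagging.

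The paper does \emph{not} close the Section 3 local energy/energy estimates via a pointwise bootstrap in the way you describe. It instead proves global existence and the $X^m$ bounds (which combine $LE^1$, $L^\infty L^2$, and a weighted $L^q L^q L^2$ Strichartz-type norm) by a Picard iteration, with the nonlinearity estimated purely in the weighted Strichartz framework of Theorem 3.2 from \cite{LMSTW} together with the weighted Sobolev embeddings \eqref{ap-S}--\eqref{ap-Sinfty}. No pointwise a~priori bound on $\phi$ enters this stage. Your alternative --- reducing global existence to an a~priori pointwise/energy bound and then running a single large bootstrap --- is not wrong in principle (once $\phi\lesssim\frac{\varepsilon}{\langle r\rangle\langle t-\rs\rangle^\kappa}$ is assumed, $\phi^p$ lands in the relevant $LE^*$ dual norm for $p\ge3$), but it entangles existence with the pointwise refinements that the paper deliberately decouples. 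The paper's Strichartz route is cleaner because it secures global existence and all the $LE^1$/$L^\infty L^2$ bounds \emph{before} any pointwise hypothesis is posited; the pointwise bootstrap only appears later, first in Section 7 (to close the $r^p$ estimate with $\gamma=\frac32+q$) and then in Section 9. Also, your phrase ``$\|\phi^{p-1}\partial\phi\|$'' mislabels the source: commuting $Z^\alpha$ through $\phi^p$ yields products $\phi_{\alpha_1}\cdots\phi_{\alpha_p}$, not $\phi^{p-1}\partial\phi$; the relevant smallness comes from putting low-order factors in $L^\infty$ and the highest-order factor in the $X$-norm, as in the proof of \eqref{nlnbd}.

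Two further points worth correcting. First, the final stage is not an ``improve by a fixed amount each pass until it saturates'' loop; the paper performs exactly one iteration when $p=3$ and two iterations when $p\ge4$, and the key structural device --- which your sketch omits --- is the decomposition $\psi=\psi_1+\psi_2$ where $\psi_1$ carries the linear tail coming from the metric perturbation $G_\alpha\in S^Z(r^{-3})\phi_{\le\cdot}+S^Z_{der}(r^{-2})\partial\phi_{\le\cdot}$ and $\psi_2$ carries the genuine nonlinearity $F_\alpha$, so that Lemma~\ref{Minkdcy} (with separate exponent bookkeeping for the $\partial_t \tG$ form) and Lemma~\ref{ptwsedcyrp} can be applied to the two pieces with different numerology. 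Without this split you cannot see why the ceiling is $\min\{2,p-2\}$: the $2$ comes from $\psi_1$ (the $r^{-3}$-decaying metric error) and the $p-2$ comes from $\psi_2$, and for $p=3$ it is $\psi_2$ that caps the rate. Second, your worry about log losses at $p=3$ is reasonable but resolved cleanly in the paper: after the Section 7 bound one has $\phi\lesssim r^{-1}\langle t-\rs\rangle^{-q}$ with $q>4/9$, so $3q_1=3(3q-1)>1$ pushes the exponent $\eta$ past the critical value $1$ in Lemma~\ref{Minkdcy}, putting the $u$-integral in the convergent regime $\tilde\eta=-1$; the logarithm in \eqref{ptwsecpt} is already absorbed by the $\delta$-slack built into $\tilde\eta$ when $\eta<1$. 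So there is no genuine borderline issue once the preliminary rate exceeds $4/9$.
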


\subsection{Local energy norms}

We consider a partition
of $ \R^{3}$ into the dyadic sets $A_R= \{R\leq \la r \ra \leq 2R\}$ for
$R \geq 1$. We will use the notation $A\lesssim B$ to mean that there is a constant $C$ independent of $u$ and $\varepsilon$ so that $|A|\leq CB$; the value of $C$ might change from line to line. Similarly, $A \ll B$ means that $|A|\leq cB$ for a small enough constant $c$.

We now introduce the local energy norm $LE$ 
\begin{equation}
\begin{split}
 \| u\|_{LE} &= \sup_R  \| \la r\ra^{-\frac12} u\|_{L^2 (\M\cap \R \times A_R)}  \\
 \| u\|_{LE[\tt_0, \tt_1]} &= \sup_R  \| \la r\ra^{-\frac12} u\|_{L^2 (\M \cap [\tt_0, \tt_1] \times A_R)},
\end{split} 
\label{ledef}\end{equation}
its $H^1$ counterpart
\begin{equation}
\begin{split}
  \| u\|_{LE^1} &= \| \nabla u\|_{LE} + \| \la r\ra^{-1} u\|_{LE} \\
 \| u\|_{LE^1[\tt_0, \tt_1]} &= \| \nabla u\|_{LE[\tt_0, \tt_1]} + \| \la r\ra^{-1} u\|_{LE[\tt_0, \tt_1]},
\end{split}
\end{equation}
as well as the dual norm
\begin{equation}
\begin{split}
 \| f\|_{LE^*} &= \sum_R  \| \la r\ra^{\frac12} f\|_{L^2 (\M\cap \R\times A_R)} \\
 \| f\|_{LE^*[\tt_0, \tt_1]} &= \sum_R  \| \la r\ra^{\frac12} f\|_{L^2 (\M \cap [\tt_0, \tt_1] \times A_R)}.
\end{split} 
\label{lesdef}\end{equation}

 We also define similar norms for higher Sobolev regularity
\[
\begin{split}
  \| u_{\leq m}\|_{LE^1} &= \sum_{|\alpha| \leq m} \| u_{\alpha}\|_{LE^1} \\
  \| u_{\leq m}\|_{LE^1[\tt_0, \tt_1]} &= \sum_{|\alpha| \leq m} \| u_{\alpha}\|_{LE^1[\tt_0, \tt_1]} \\
  \| u_{\leq m}\|_{LE[\tt_0, \tt_1]} &= \sum_{|\alpha| \leq m} \| u_{\alpha}\|_{LE[\tt_0, \tt_1]},
\end{split}
\]
respectively 
\[
\begin{split}
  \| f\|_{LE^{*,k}} &=  \sum_{|\alpha| \leq k}  \| \partial^\alpha f\|_{LE^{*}} \\
  \| f\|_{LE^{*,k}[\tt_0, \tt_1]} &=  \sum_{|\alpha| \leq k}  \| \partial^\alpha f\|_{LE^{*}[\tt_0, \tt_1]}.
\end{split}  
\]

Finally, we introduce a weaker version of the local energy
decay norm \footnote {In Kerr one can actually control a stronger norm, where the $r$-derivative does not degenerate at the trapped set. However, we do not need the stronger norm in this paper.}
\[
\begin{split}
  \| u\|_{LE^1_{w}} &= \| (1-\chi_{ps}) \nabla u\|_{LE} + \| \la r\ra^{-1} u\|_{LE} \\
  \| u\|_{LE^1_{w}[\tt_0, \tt_1]} &= \| (1-\chi_{ps}) \nabla u\|_{LE[\tt_0, \tt_1]} + \| \la r\ra^{-1} u\|_{LE[\tt_0, \tt_1]},
\end{split}
\]
as well as the dual norms
\[
\begin{split}
 \| f\|_{LE^*_w} &= \| \chi_{ps} \nabla f\|_{L^2 L^2}+ \| f\|_{LE^*} \\
\| f\|_{LE^*_w[\tt_0, \tt_1]} &= \| \chi_{ps} \nabla f\|_{L^2[\tt_0, \tt_1] L^2}+ \| f\|_{LE^*[\tt_0, \tt_1]}.
\end{split}
\]

Here $\chi_{ps}$  is a smooth, compactly
supported spatial cutoff function that equals $1$ in a neighborhood of the trapped set. We also define the higher order weak norms as above.

We define the (nondegenerate) energy
\[
E[u](\tt) = \int_{\Sigma(\tt)} |\pa u|^2 d\Sigma_K(\tt).
\]

We will need the following local energy estimates, which were proved in \cite{TT} (for derivatives) and \cite{MTT} (for vector fields):
\beq\label{wle}
\| u_{\leq m}\|_{LE_w^1[\tt_0, \tt_1]} + \|\pa u_{\leq m}\|_{L^\infty L^2} \lesssim  E[u_{\leq m}](\tt_0) + \|(\Box_K u)_{\leq m}\|_{L^1L^2+LE^*_w[\tt_0, \tt_1]}.
\eq
We will also use a version that replaces the loss near the trapped set with a derivative loss:
\beq\label{lederiv}
\| u_{\leq m}\|_{LE^1[\tt_0, \tt_1]} + \|\pa u_{\leq m}\|_{L^\infty L^2} \lesssim  E[u_{\leq m+1}](\tt_0) + \|(\Box_K u)_{\leq m+1}\|_{L^1L^2+LE^*[\tt_0, \tt_1]}.
\eq

\medskip

\newsection{Local energy decay for the nonlinear problem}

In order to prove the main theorem from this section, we will use two results from  \cite{LMSTW}. We need the following weighted Sobolev
estimates, see Lemma 3.1 from \cite{LMSTW}:

For $R\ge 10$, $2\le q<\infty$, and any $b\in \R$, we have
\beq\label{ap-S}
 \|r^b v\|_{L_r^{\frac{2q(q-1)}{q-2}}L_{\omega}^{\infty}(r\ge R+1)}
 \lesssim  \|r^{b-\frac{1}{q-1}} v_{\leq 6}\|_{L_r^{q}L_{\omega}^{2}(r\ge R)},
\eq
\begin{equation}
  \label{ap-Sinfty}
  \|r^b v\|_{L^\infty_x(|x|\ge R+1)} \lesssim 
  \|r^{b-\frac{2}{q}} v_{\leq 6}\|_{L^q_r L^2_\omega(r\ge R)}.
\end{equation}

Moreover, we will use Theorem 3.2 from \cite{LMSTW}:

\begin{theorem}\label{thm-wStri} (Theorem 3.2, \cite{LMSTW})

Let $p\in [2,\infty)$.  Suppose $w$ solves
\[
\Box_K w=G_1+G_2, \qquad w(0,\cdot)=\pa_t w(0,\cdot) = 0,
\]
Additionally, suppose that $w$ vanishes in $[0,\infty]\times \{|x|\leq c\}$ for some $c>0$ .

Then for any $\delta_1>0$ and $1/2-1/q<s<1/2$ we have
\beq\label{weiStri}
 \| r^{\frac{3}{2}-\frac{4}q-s} w\|_{L^q L^q L^2}\lesssim
\|  r^{-\frac{1}{2}-s}  G_1\|_{ L^1L^1 L^2} + \| r^{\frac{3}{2}-s+\delta_1}G_2\|_{L^2L^2} .
 \eq
\end{theorem}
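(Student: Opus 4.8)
\textbf{Proof proposal for Theorem~\ref{mainthm}.}

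The plan is to set up a large bootstrap argument on the pointwise size of $\phi$ and its vector-field derivatives, following the structure announced in the introduction (Sections~3--9). First I would run the local existence theory: since the data are smooth and compactly supported with small $H^{N+1}\times H^N$ norm, finite-speed-of-propagation plus energy estimates give a local-in-$\tt$ solution, and the bootstrap below will show the lifespan is infinite. The key quantitative input is the weak local energy estimate \eqref{wle} (and its derivative-loss variant \eqref{lederiv}) applied to $u=\phi_{\leq m'}$ for $m'$ ranging up to some large $N$, together with the $r^p$-weighted (Dafermos--Rodnianski) estimates and the weighted Strichartz estimate of Theorem~\ref{thm-wStri}. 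Because the nonlinearity $\pm\phi^p$ is an integer power with $p\geq 3$, each factor of $\phi$ in $\partial^\alpha(\phi^p)$ carries decay, so the source term is better than the solution; the whole argument is about closing this gain into the Price-law rate $\la\at\ra^{-1}\la\at-\rs\ra^{-\kappa}$ with $\kappa=\min\{2,p-2\}$.

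Concretely the steps are: (1) \emph{Nonlinear local energy decay} (Section~3): treating $\pm\phi^p$ as a forcing term, use \eqref{wle} together with the weighted Sobolev embeddings \eqref{ap-S}--\eqref{ap-Sinfty} and Theorem~\ref{thm-wStri} to bound $\|\phi_{\leq m}\|_{LE^1_w}$ and the energy flux by $\varepsilon$ plus a superlinear term in the bootstrap constant, absorbed for small $\varepsilon$; this already yields the crude pointwise rate $\frac{\la\at-\rs\ra^{1/2}}{\la\at\ra}$ for $\phi_{\leq m}$, which is insufficient for $p\leq 5$. (2) \emph{$r^p$-estimates} (Section~4) to upgrade the weighted $L^2$ bounds near null infinity. (3) \emph{Reformulation} (Section~5) and the \emph{main iteration lemma} (Section~6), a fixed-time-to-pointwise conversion: one commutes with $\partial$, $\Omega$, $S$, writes $\Box_K=\Box_S+O(r^{-2})(|\partial^2\cdot|+|\partial\cdot|)$ using \eqref{KSdiff}--\eqref{K-Sc}, and converts decay of the right-hand side in $LE^*$/weighted norms into pointwise decay of $\phi$ via the fundamental solution on Schwarzschild and the hierarchy of $r^p$-weights. (4) An \emph{initial pointwise rate} $\frac{1}{r\la\at-\rs\ra^{\delta}}$ (Section~7). (5) \emph{Interior and derivative improvements} (Section~8): in $\{r\lesssim 1\}$ one gets the full Price rate (indeed better, $\at^{-3}$ heuristically, but here $\at^{-1}\la\at-\rs\ra^{-\kappa}$ suffices), and each extra $\partial$ gains a power of $\la\at-\rs\ra^{-1}$ up to saturation. (6) The \emph{final bootstrap} (Section~9): assume $\phi_{\leq m+k}\lesssim \varepsilon \la\at\ra^{-1}\la\at-\rs\ra^{-\kappa_j}$ with $\kappa_j<\kappa$, feed $\pm\phi^p$ (which then decays like $\la\at\ra^{-p}\la\at-\rs\ra^{-p\kappa_j}$, integrable and with room to spare since $p\geq 3$) back through the iteration lemma of Section~6, and improve $\kappa_j\to\kappa_{j+1}$ until reaching $\kappa$; finitely many steps suffice because $\kappa\leq 2$.

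The main obstacle is the \emph{trapping} and the associated derivative loss: the local energy estimate one actually controls, \eqref{wle}, degenerates at the photon sphere $|r-3M|\lesssim|a|$, so every time one converts $L^2$-type control into pointwise control via commuting with $Z=\{\partial,\Omega,S\}$ one pays derivatives (this is exactly why the numerology $|\alpha|=i+3j+9k$ and the gap $N\gg m$ are needed). Managing this loss while simultaneously propagating the $\la\at-\rs\ra$-weights through the scaling vector field $S$—which does not commute cleanly with the weights and whose use near $\{\at\approx\rs\}$ must be handled carefully—is the delicate point, and it is the reason the argument needs both the weak norm \eqref{wle} and the derivative-loss norm \eqref{lederiv}, plus the weighted Strichartz estimate \eqref{weiStri} to handle the region near null infinity where neither purely elliptic nor purely $r^p$-based methods give the sharp $\la\at-\rs\ra^{-\kappa}$ rate. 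A secondary difficulty is that for $p=3,4$ the crude rate from Section~3 is not even enough to start the nonlinear iteration (the source $\phi^p$ is not a priori integrable in time), so the $r^p$-estimates of Section~4 and the reformulation of Section~5 must be arranged to give just enough weighted decay to bootstrap, which is the reason $\kappa=p-2$ rather than $2$ in that range.
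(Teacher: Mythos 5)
There is a fundamental mismatch: the statement you were asked to prove is Theorem~\ref{thm-wStri}, the \emph{linear} weighted Strichartz estimate \eqref{weiStri} for the inhomogeneous wave equation $\Box_K w=G_1+G_2$ on Kerr, which in this paper is simply quoted from Theorem 3.2 of \cite{LMSTW}. Your proposal instead sketches the global bootstrap architecture for the \emph{main} theorem (Theorem~\ref{mainthm}), and indeed it explicitly invokes Theorem~\ref{thm-wStri} as an ingredient in step (1). As an argument for the statement at hand it is therefore either circular or simply not addressed: nothing in your outline establishes the bound $\| r^{\frac32-\frac4q-s} w\|_{L^qL^qL^2}\lesssim \|r^{-\frac12-s}G_1\|_{L^1L^1L^2}+\|r^{\frac32-s+\delta_1}G_2\|_{L^2L^2}$, and no step of your bootstrap could be specialized to yield it, since the estimate is a statement about the linear flow with zero data and has nothing to do with the nonlinearity $\pm\phi^p$, the smallness of $\varepsilon$, or the iteration in $\kappa_j$.

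A proof of the actual statement would have to proceed along the lines of \cite{LMSTW}: one exploits that $w$ vanishes for $|x|\leq c$ and splits into a region near the black hole, where local energy decay for Kerr with $|a|\ll M$ (as in \eqref{wle}, proved in \cite{TT}) controls the solution and the $L^2L^2$-type source term $G_2$ enters through the dual local energy norm, and a far region $r\gtrsim 1$, where $g_K$ is a long-range perturbation of Minkowski and one uses the weighted Strichartz/trace estimates for $\Box$ on $\R^{1+3}$ (the $L^1L^1L^2\to L^qL^qL^2$ abstract dual estimate with the radial weights dictated by $1/2-1/q<s<1/2$), gluing the two regimes with cutoffs and absorbing the commutator errors, which are supported in a compact spatial region, back into the local energy norm. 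None of this machinery appears in your proposal, so the statement remains unproved; the surrounding discussion of trapping, derivative loss, and the $r^p$ hierarchy, while accurate as a description of Sections 3--9 of the paper, is not relevant to this particular linear estimate.
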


The goal of this section is to prove the following:
\begin{theorem}\label{LE}
Let $m\geq 6$ be a positive integer. Suppose that the initial data $(\phi_0, \phi_1)$ satisfies
\beq\label{Th-as}
\|(\phi_0)_{\leq m+6}\|_{H^1}^2+\|(\phi_1)_{\leq m+6}\|_{L^2}^2 \leq \varepsilon^2,
\eq 
where $\varepsilon$ is small enough. Then the equation \eqref{nlw} has a global solution that satisfies
\begin{equation}\label{derivloss}
\sup_{\tt} E[\phi_{\leq m}](\tt) + \| \phi_{\leq m}\|_{LE^1} \leq c_m \varepsilon
\end{equation}
\begin{equation}\label{nonlin}
 \| \phi_{\leq m}\|_{L^{p}L^{2p}} \leq c_m \varepsilon.
\end{equation}
\end{theorem}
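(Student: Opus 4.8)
The plan is to run a standard continuity/bootstrap argument in time, treating the nonlinearity $\pm\phi^p$ as a forcing term that is handled via the two imported results from \cite{LMSTW}, namely the weak local energy estimate \eqref{wle} and the weighted Strichartz estimate \eqref{weiStri}. Local existence for \eqref{nlw} on $\M$ is classical (energy methods, since the data are compactly supported away from $r=r_e$ and $p\geq 3$ is an integer so the nonlinearity is smooth), so the entire content is the global a priori bound. Thus I would fix the bootstrap quantities
\[
X(T) = \sup_{0\le \tt\le T} E[\phi_{\le m}](\tt) + \|\phi_{\le m}\|_{LE^1[0,T]}, \qquad Y(T) = \|\phi_{\le m}\|_{L^p L^{2p}[0,T]},
\]
assume $X(T)+Y(T)\le 2C_0\varepsilon$ on a maximal interval, and show we can improve this to $\le C_0\varepsilon$ for $\varepsilon$ small. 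The key structural point is that, after differentiating \eqref{nlw} by the vector fields $u_\alpha$ with $|\alpha|\le m$, the commutator terms $[u_\alpha,\Box_K]\phi$ are lower order and absorbed (using that $\Box_K$ differs from $\Box_S$ by $S^Z_{der}(r^{-2})$, cf. \eqref{K-Sc}), while $u_\alpha(\phi^p)$ is, by the Leibniz rule, a sum of products of $m+1$ factors of the form $u_{\beta_i}\phi$ with $\sum|\beta_i|\lesssim m$; since $m\gg 6$ only one factor can have more than $6$ derivatives, so Hölder in $\omega$ and the Sobolev embeddings \eqref{ap-S}, \eqref{ap-Sinfty} let one put the high-derivative factor in $L^2_\omega$ and the remaining $p-1$ factors in $L^\infty_\omega$ with a favorable power of $r$ to spare.

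Next I would split the forcing $u_\alpha(\phi^p) = G_1 + G_2$ to feed it into the two estimates: the part supported in a fixed compact region $\{r\le c\}$ (or more precisely the part that one wants to put in $L^1 L^2$) goes into the $L^1L^2$ slot of \eqref{wle}, controlled by $\int_0^T \|\phi_{\le m}\|_{L^\infty}^{p-1}\,d\tt$ times an energy factor, where the $L^\infty$ decay comes from having already established in Section 3's setup (or by crude Sobolev embedding from the $LE^1$ and energy bounds) at least an $\varepsilon\la\tt\ra^{-1/2}\la\tt-\rs\ra^{1/2}$-type bound — the text notes this is the ``$(t-r)^{1/2}/t$'' rate. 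For $p\ge 4$ this alone gives an integrable-in-$\tt$ weight after using one factor of $LE$; for $p=3$ it does not, which is exactly why the weighted Strichartz estimate \eqref{weiStri} is needed: one places the nonlinearity (or its far region part) in the $L^2L^2$ slot $\|r^{3/2-s+\delta_1}G_2\|_{L^2L^2}$, estimating $\phi^p = \phi\cdot\phi^{p-1}$ by putting one factor in the $L^pL^{2p}$-type Strichartz norm (this is where $Y(T)$ gets used and reproduced) and the rest in $L^\infty$ via \eqref{ap-Sinfty}, choosing $b$ and $s$ so that the total $r$-power is nonpositive; tracking the weighted Sobolev gains shows the product closes with a small constant. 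Running \eqref{wle} then yields the improved bound on $X(T)$, and feeding the same forcing decomposition into \eqref{weiStri} (with zero data, after subtracting off the linear evolution of $(\phi_0,\phi_1)$, which is handled by \eqref{wle} and standard Strichartz for the linear Kerr wave equation) yields the improved bound on $Y(T)$; combining gives $X(T)+Y(T)\le C_0\varepsilon + C(C_0\varepsilon)^2 \le C_0\varepsilon$, closing the bootstrap and hence giving global existence with \eqref{derivloss}, \eqref{nonlin}.

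The main obstacle, and where I would spend the most care, is the bookkeeping at the interface between the two estimates: one must split $u_\alpha(\phi^p)$ into $G_1+G_2$ so that \emph{both} the $L^1L^2+LE^*_w$ norm in \eqref{wle} and the two norms in \eqref{weiStri} are finite and small, with consistent choices of the parameters $q,s,\delta_1,b$ and of the spatial cutoff radius $c$, and so that the $r$-weights match what the weighted Sobolev inequalities \eqref{ap-S}–\eqref{ap-Sinfty} actually deliver from the available $L^q_rL^2_\omega$ norms — these are controlled by $X(T)$ only in a weighted sense that is strongest in the wave zone, so the region $r\sim\tt$ versus $r\ll\tt$ must be treated separately, and near the trapped set $|r-3M|\lesssim|a|$ one must use the weak norm (with the $\chi_{ps}$ loss), which is harmless here since that region is compact and the nonlinearity there is controlled purely by $L^\infty$ decay. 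A secondary technical point is verifying that the commutators $[u_\alpha,\Box_K]$ genuinely lose at most the number of derivatives encoded in the numerology $|\alpha|=i+3j+9k$ borrowed from \cite{MTT} — this is why $m\ge 6$ and $N\gg m$ are needed — but this is routine given \eqref{KSdiff} and the corresponding Schwarzschild commutator identities.
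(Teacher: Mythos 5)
Your proposal correctly identifies the two imported ingredients (the local energy estimate and the weighted Strichartz estimate from \cite{LMSTW}), but the route you sketch is genuinely different from the paper's and has a real gap. The paper never invokes any pointwise $L^\infty$ decay in the proof of this theorem; its argument is closed purely at the $L^2$ level. It introduces the norm $\|\phi\|_{X^m}=\|r^{-\gamma}\chi\phi_{\leq m}\|_{L^qL^qL^2}+\|\partial\phi_{\leq m}\|_{L^\infty L^2}+\|\phi_{\leq m-1}\|_{LE^1}$ with a fixed $q\in(1+\sqrt 2,3)$ (so $q\neq p$) and weight $\gamma=\frac{4}{q}-\frac{2}{q-1}$ chosen to match exactly what \eqref{weiStri} delivers, together with the dual norm $N^m$, and proves a linear estimate $\|\phi\|_{X^m}\lesssim\text{data}+\|\Box_K\phi\|_{N^m}$ via \eqref{lederiv} plus \eqref{weiStri}, then a nonlinear estimate $\|\psi^p\|_{N^m}\lesssim\|\psi\|_{X^m}^p$ via \eqref{ap-S}, \eqref{ap-Sinfty}, and closes by Picard iteration. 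The analogue of your $Y(T)$ is therefore a \emph{weighted} $L^q_tL^q_rL^2_\omega$ quantity dictated by the weighted Strichartz estimate, not an $L^pL^{2p}$ norm with $p$ the power of the nonlinearity.

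The gap in your proposal is the reliance on the $\la\tt\ra^{-1}\la\tt-\rs\ra^{1/2}$ pointwise bound as an ingredient of the bootstrap. That bound is Theorem~\ref{ptwsedcy1} (Theorem 6.1 of \cite{LT}), which requires $\|\phi_{\leq|\alpha|+13}\|_{LE^1}$ on the right-hand side; it does \emph{not} follow from ``crude Sobolev embedding from the $LE^1$ and energy bounds''. It requires the vector-field machinery and a loss of 13 commutators, and it is a downstream consequence of the very $LE^1$ bound you are trying to establish. One can in principle fold it into a self-consistent bootstrap, but then you must track the derivative hierarchy carefully so that the $+13$ loss lands below the top index $m$ at each step, and you would also need to justify why an unweighted $L^pL^{2p}$ Strichartz norm is controllable on Kerr at all given trapping (\eqref{weiStri} only controls $\|r^{3/2-4/q-s}w\|_{L^qL^qL^2}$, with a weight that degenerates appropriately, and with the hypothesis that $w$ vanishes near the trapped set, which is why the paper inserts the cutoff $\chi$). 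As written, your proposal does not close these two loops, whereas the paper's norm $X^m$ is built precisely so that neither issue arises.
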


\begin{proof}

 Let $\chi\in C^\infty (\R)$ satisfy $0\le \chi(r)\le 1$, $\chi(r)\equiv
0$ for $r\le R_1$, and $\chi(r)\equiv 1$ for $r>R_1+1$. Pick any $1+\sqrt{2}<q<3$.  For $\ga = \frac{4}{q}-\frac{2}{q-1} = \frac{2(q-2)}{q(q-1)}$, we define
\beq\label{X-norm} 
\| \phi\|_{X^m} = \| r^{-\ga} \chi \phi_{\leq m}  \|_{L^q L^q L^2} + \|\pa \phi_{\leq m}\|_{L^\infty L^2} + \|\phi_{\leq m-1}\|_{LE^1} 
\eq
\beq\label{N-norm}
 \|g\|_{N^m} = \| r^{- \ga q} \chi^q g_{\leq m} \|_{L^1 L^1 L^2} + \|g_{\leq m} \|_{L^1 L^2}.
\eq

We first prove the following linear estimate:
\begin{equation}\label{linest}
\|\phi\|_{X^m} \lesssim \|(\phi_0)_{\leq m+6}\|_{H^1}+\|(\phi_1)_{\leq m+6}\|_{L^2}+ \|\Box_K \phi\|_{N^m} \ .
\end{equation}

Indeed, the last two terms in \eqref{X-norm} can be estimated by using \eqref{lederiv}. 

In order to estimate the first term, we apply \eqref{weiStri} (with $s=\frac32-\frac{2}{q-1}$ and small $\delta_1$). 

Pick any $|\alpha|\leq m$. Note first that, due to the support properties of $\chi$ and the initial data, we have
\[
\chi \phi_{\alpha}(0, \cdot)= \pa_t \left(\chi \phi_{\alpha}(0, \cdot)\right) = 0.
\] 
 
 We have
\[
\Box_K (\chi \phi_{\alpha}) = \chi (\Box_K \phi)_{\alpha} + [\Box_K, \chi] \phi_{\alpha} + \chi [\Box_K, Z^{\alpha}] \phi
\]

An easy computation gives
\[
[\Box_K, \pa] \phi\in S^Z(r^{-2})\pa\pa^{\leq 1} \phi, \quad [\Box_K, \Omega]\phi \in S^Z(r^{-2})\pa\pa^{\leq 1} \phi,
\]
\[
[\Box_K, S] \phi \in S^Z(1) \Box_K \phi+ S^Z(r^{-2+})\pa \phi + S^Z(r^{-2+})\pa \Omega \phi + S^Z(r^{-2})\pa\pa^{\leq 1} \phi,
\]
and thus by induction we obtain that
\beq\label{comm}
[\Box_K, Z^{\alpha}] \phi = F_1+F_2, \quad F_1\in S^Z(1) (\Box_K \phi)_{\leq |\alpha|}, \quad F_2 \in S^Z(r^{-2+})\pa \phi_{\leq |\alpha|}.
\eq

We now pick $G_1 = \chi (\Box_K \phi)_{\alpha} + \chi F_1$ and $G_2=[\Box_K, \chi] \phi_{\alpha} + \chi F_2$. 

Since $\chi-\chi^q$ is supported in $[R, R+1]$, and $\frac{1}{2}+s = \ga q$, we see that
\[
\|  r^{-\frac{1}{2}-s}  G_1\|_{ L^1L^1 L^2} \lesssim \| r^{- \ga q} \chi (\Box_K \phi)_{\leq |\alpha|} \|_{L^1 L^1 L^2} \lesssim \| r^{- \ga q} \chi^q (\Box_K \phi)_{\leq |\alpha|} \|_{L^1 L^1 L^2} + \|(\Box_K \phi)_{\leq |\alpha|}\|_{L^1L^2}.
\] 
 
 On the other hand, we see that for small enough $\delta_1$:
\[
\|r^{\frac{3}{2}-s+\delta_1} \chi F_2 \|_{L^2L^2} \lesssim  \|\chi r ^{-\frac{1}{2}-s+\delta_1+} \pa\phi_{\leq |\alpha|}\|_{L^2L^2} \lesssim  \|\phi_{\leq|\alpha|}\|_{LE^1}. 
\]

We also have that
\[
\|r^{\frac{3}{2}-s+\delta_1} [\Box_K, \chi] \phi_{\alpha}\|_{L^2L^2} \lesssim \|\pa^{\leq 1}\phi_{\alpha}\|_{L^2L^2(R\leq |x|\leq R+1)} \lesssim  \|\phi_{\leq |\alpha|}\|_{LE^1}. 
\]

Theorem~\ref{thm-wStri} now implies \eqref{linest}.

We now finish the proof of Theorem~\ref{LE}. We want to show that
\beq\label{Xmbd}
\|\phi\|_{X^m} \leq C_m \varepsilon.
\eq

It is enough to show that, for any $\psi$,
\beq\label{nlnbd}
\| \psi^p\|_{N^m} \lesssim \|\psi\|_{X^m}^p.
\eq
Indeed, let $\psi_0\equiv 0$ and recursively define $\psi_{k+1}$
to be the solution to the linear equation
\beq\label{Picard}
  \Box_K \psi_{k+1} = \pm \psi_k^p , \qquad   \psi_{k+1}|_{\Sigma^-} = \phi_0, \qquad
  \tilde T \psi_{k+1}|_{\Sigma^-} = \phi_1.
\eq

 \eqref{linest} and \eqref{nlnbd} imply
\[
\|\psi_{k+1}\|_{X^m} \lesssim C(\epsilon + \|\psi_k\|_{X^m}^p),
\]
and a continuity argument implies that for small enough $\epsilon$ the sequence $\psi_k$ is Cauchy in $X^m$ and thus converges to a solution $\phi$ that satisfies \eqref{Xmbd}. 

Let us now prove \eqref{nlnbd}. We have for any $|\alpha|\leq m$:
\beq\label{nonlinvf}
Z^\alpha (\psi^p)\lesssim  \sum_{\substack{|\alpha_1|\leq \dots\leq|\alpha_p|\leq |\alpha| \\ \alpha_1 +\dots\alpha_p=\alpha}} |Z^{\alpha_1} \psi| \dots |Z^{\alpha_p} \psi|.
\eq

In a compact region, we have by H\"older and Sobolev embeddings
\[\begin{split}
& \Bigl\| Z^{\alpha_1} \psi \dots Z^{\alpha_p} \psi\Bigr\|_{L^1L^2(r\leq R+2)}  \leq \|Z^{\alpha_{p-1}} \psi\|_{L^2 L^\infty(r\leq R+2)} \|Z^{\alpha_p} \psi\|_{L^2 L^2(r\leq R+2)} \prod_{i=1}^{p-2} \|Z^{\alpha_i} \psi\|_{L^\infty L^\infty(r\leq R+2)} 
\\ & \lesssim  \|\psi_{\leq |\alpha_{p-1}|+2}\|_{L^2 L^2(r\leq R+3)}\|\psi_{\leq |\alpha_p|}\|_{L^2 L^2(r\leq R+2)} \|\pa \psi_{\leq |\alpha_{p-2}|+1}\|^{p-2}_{L^\infty L^2}\\ &  
 \lesssim \|\psi\|_{X^{\lf m/2\rf +2}}^{p-1} \|\psi\|_{X^m} \leq \|\psi\|^p_{X^m}.
\end{split}\]
On the other hand, for $r\geq R+2$ we have, using \eqref{ap-S} (with $b=\frac{\ga}{q-1}$), \eqref{ap-Sinfty} (with $b=\ga$) and the fact that $\frac{\ga-1}{q-1}\le -\ga$:
\[\begin{split}
& \Bigl\| Z^{\alpha_1} u \dots Z^{\alpha_p} \psi\Bigr\|_{L^1L^2(r\geq R+2)}  \lesssim \|r^{-\ga}\chi Z^{\alpha_p} \psi\|_{L^q L^q L^2} \|r^{\ga} Z^{\alpha_1} \psi \dots Z^{\alpha_{p-1}} \psi\|_{L^{\frac{q}{q-1}} L^{\frac{2q}{q-2}} L^\infty(r\geq R+2)} \\ & \lesssim  \|r^{-\ga}\chi Z^{\alpha_p} \psi\|_{L^q L^q L^2} \|\psi_{\leq |\alpha_{p-1}|}\|^{p-q}_{L^\infty L^\infty(r\geq R+2)} \|r^{\ga} (\psi_{\leq |\alpha_{p-1}|})^{q-1} \|_{L^{\frac{q}{q-1}} L^{\frac{2q}{q-2}} L^\infty(r\geq R+2)} \\ & \lesssim \|\psi\|_{X^m} \|\psi\|^{p-q}_{X^{\lf m/2\rf +6}} \|r^{\frac{\ga}{q-1}} \psi_{\leq |\alpha_{p-1}|} \|^{q-1}_{L^q L^{\frac{2q(q-1)}{q-2}} L^\infty(r\geq R+2)}  \lesssim \|\psi\|^{1+p-q}_{X^m}\|r^{\frac{\ga-1}{q-1}} \psi_{\leq |\alpha_{p-1}|}\|^{q-1}_{L^q L^q L^\infty(r\geq R+1)}\\ & \lesssim \|\psi\|^{1+p-q}_{X^m}  \|r^{-\ga} \chi \psi_{\leq |\alpha_{p-1}|+6}\|^{q-1}_{L^q L^q L^2} \lesssim \|\psi\|^{1+p-q}_{X^m}  \|\psi\|^{q-1}_{X^{\lf m/2\rf +2}} \lesssim \|\psi\|^p_{X^m}.
\end{split}\]

Finally,
\[\begin{split}
& \Bigl\| r^{-\ga q} \chi^q Z^{\alpha_1} \psi \dots Z^{\alpha_p} \psi \Bigr\|_{L^1L^1L^2}  \lesssim  \|r^{-\ga}\chi Z^{\alpha_p} \psi\|_{L^q L^q L^2} \|r^{-\ga(q-1)} \chi^{q-1}Z^{\alpha_1} \psi \dots Z^{\alpha_{p-1}} \psi\|_{L^{\frac{q}{q-1}} L^{\frac{q}{q-1}} L^\infty} \\ & \lesssim \|r^{-\ga}\chi Z^{\alpha_p} \psi\|_{L^q L^q L^2} \|\psi_{\leq |\alpha_{p-1}|}\|^{p-q}_{L^\infty L^\infty} \|r^{-\ga(q-1)} \chi^{q-1}(\psi_{\leq |\alpha_{p-1}|})^{q-1} \|_{L^{\frac{q}{q-1}} L^{\frac{q}{q-1}} L^\infty} \\ & \lesssim \|\psi\|_{X^m} \|\psi\|^{p-q}_{X^{\lf m/2\rf +2}} \|r^{-\ga} \chi \psi_{\leq |\alpha_{p-1}|} \|^{q-1}_{L^q L^q L^\infty} \lesssim  \|\psi\|^p_{X^m}.
\end{split}\]

The proof of \eqref{nlnbd} is now complete.

\end{proof}

\newsection{$r^p$ estimates}

The local energy spaces from the previous section are enough to obtain a weak decay estimate, see Theorem 6.1 from \cite{LT}.

\begin{theorem}\label{ptwsedcy1} (Theorem 6.1, \cite{LT})

Let $T$ be a fixed time. We then have for $T\leq \tt\leq 2T$:
\begin{equation}\label{ptdecayu}
|\phi_{\leq _{|\alpha|}}| \leq C_{m} \la \tt\ra^{-1} \la \tt-\rs\ra^{1/2} \|\phi_{\leq |\alpha|+13}\|_{LE^1[T, 2T]}.
\end{equation}
\end{theorem}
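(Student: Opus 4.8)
This estimate is a Klainerman--Sobolev-type inequality adapted to the black-hole setting, and it sits in the standard ``local energy decay $\Rightarrow$ pointwise decay'' framework of \cite{MTT}, \cite{Tat}; no $r^p$-estimates are needed, only the local energy spaces of Section~2. The plan is to bound $|\phi_{\le|\alpha|}(\tt,x)|$ by a weighted $L^2$ norm of $\phi$ together with its $\{\pa,\Omega,S\}$-derivatives over a region adapted to the outgoing cone $\{\tt=\rs\}$, and then to recognize that norm inside $\|\phi_{\le|\alpha|+13}\|_{LE^1[T,2T]}$ via \eqref{wle}--\eqref{lederiv}. The weight $\la\tt\ra^{-1}\la\tt-\rs\ra^{1/2}$ --- weaker, by a factor $\la\tt-\rs\ra$, than the $\la\tt-\rs\ra^{-1/2}$ of classical Klainerman--Sobolev --- is forced by the absence of Lorentz boosts: the transversal derivative $\pa_{\tt}-\pa_r$ is available only unweighted, hence usable only over a region of radial width $\la\tt-\rs\ra$.

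Concretely, fix $(\tt,x)$ with $\tt\in[T,2T]$, put $\rs_0=|x|$ and $d=\la\tt-\rs_0\ra$. In the region $\rs_0\lesssim\tt/2$ one has $d\approx\tt$ and the target weight is $\la\tt\ra^{-1/2}$; there a plain Sobolev embedding on a spatial ball of radius $\sim\tt$ (resp.\ of radius $\sim1$ near the horizon), followed by Hardy's inequality to pass from $\pa\phi$ to $\phi$ (no weight is lost since $\la r\ra\approx\tt$) and then by \eqref{lederiv}, yields the bound. In the wave zone $\rs_0\approx\tt$ one runs an anisotropic Sobolev inequality on the spacetime tube through $(\tt,x)$ that follows the light ray, of radial width $\sim d$ and angular aperture a fixed fraction of $\S^2$ (angular length $\sim\tt$), over the slab $[T,2T]$: on this tube a unit transversal derivative is an order-one combination of the $\pa$'s, a unit angular derivative equals $\tt^{-1}$ times a rotation $\Omega$, and the good null derivative $\pa_{\tt}+\pa_r$ equals $\tt^{-1}S$ up to a harmless $O(d/\tt)$ error; converting derivatives accordingly gives
\[
|\phi_{\le|\alpha|}(\tt,x)|^2 \;\lesssim\; d\,\la\tt\ra^{-3}\sum_{|\beta|\le|\alpha|+C}\|\phi_\beta\|_{L^2([T,2T]\times A_{R_0})}^2 ,\qquad R_0\approx\tt,
\]
for a fixed $C$, the factor $d^{+1}$ coming from the single transversal direction and $\la\tt\ra^{-3}$ from the three good ones. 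The enlargement from $|\alpha|+C$ to $|\alpha|+13$ absorbs the numerology of $\{\pa,\Omega,S\}$ and the derivative loss near the trapped set in \eqref{wle}, \eqref{comm}.

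It remains to bound each $\|\phi_\beta\|_{L^2([T,2T]\times A_{R_0})}$ by $\|\phi_{\le|\alpha|+13}\|_{LE^1[T,2T]}$. Whenever $\phi_\beta$ carries at least one $\pa$ this is immediate, $\|\phi_\beta\|_{L^2([T,2T]\times A_{R_0})}\lesssim\la\tt\ra^{1/2}\|\phi_{\le|\beta|}\|_{LE^1[T,2T]}$, and plugging back reproduces exactly $\la\tt\ra^{-1}d^{1/2}\|\phi_{\le|\alpha|+13}\|_{LE^1[T,2T]}$. The delicate terms --- and the step I expect to be the main obstacle --- are the genuinely \emph{low-order} ones, namely the $\phi_\beta$ that contain no $\pa$ (that is, $\phi$ itself and the $\Omega^j\phi$): here the weak weight $\la r\ra^{-1}\phi$ built into the $LE^1$ norm, combined with Hardy, only yields $\|\phi(\tt)\|_{L^2(A_{R_0})}\lesssim\la\tt\ra\,\|\pa\phi(\tt)\|_{L^2}$, and the extra power of $\tt$ destroys the gain. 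Controlling these terms with the full $\la\tt\ra^{-1}$ weight cannot be done by soft Sobolev estimates alone; one must use that $\phi$ is a \emph{solution} of $\Box_K\phi=\pm\phi^p$, exploiting the local energy estimates \eqref{wle}--\eqref{lederiv} together with the equation (the nonlinear source and the commutators \eqref{comm} being of lower order) to recover the decay of the undifferentiated part of $\phi$ on the relevant slab. Assembling the interior estimate, the wave-zone bound, the elementary ($\pa$-carrying) terms, and this low-order input yields \eqref{ptdecayu}; this last point is the crux of the argument in \cite{LT}.
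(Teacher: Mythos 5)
This statement is cited verbatim from \cite{LT} (Theorem 6.1) and the paper supplies no proof of its own; there is therefore no ``paper's proof'' against which your argument can be checked line by line. What you have written is a plausible Klainerman--Sobolev style outline, and the qualitative picture --- interior region handled by spatial Sobolev and the local energy weights, wave zone handled by an anisotropic Sobolev in the $\{\pa,\Omega,S\}$ basis, and a gain forced by the single unweighted transversal direction --- is indeed the framework used in \cite{MTT} and \cite{LT} for this class of estimates. So the spirit is right.

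However, your sketch is not a proof, and you yourself flag the gap: the terms in the anisotropic Sobolev that carry no $\pa$ are only controlled, via the $\la r\ra^{-1}\phi$ part of the $LE^1$ norm, by $R^{3/2}\|\phi\|_{LE^1}$ over the dyadic region $A_R$ with $R\approx\tt$, which fed into your claimed inequality gives $|\phi|^2\lesssim\la\tt-\rs\ra\,\|\phi\|_{LE^1}^2$ rather than $\la\tt-\rs\ra\,\la\tt\ra^{-2}\|\phi\|_{LE^1}^2$. This is a genuine loss of a full factor $\la\tt\ra$. The remedy you then propose --- that ``one must use that $\phi$ is a solution,'' invoking \eqref{wle}--\eqref{lederiv} and the commutator structure \eqref{comm} --- is asserted without a mechanism, and I am not convinced it is the right diagnosis. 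Note that using \eqref{wle}--\eqref{lederiv} again would be circular here: they are precisely what produces the $LE^1$ control already appearing on the right side of \eqref{ptdecayu}. What the statement does require beyond raw Sobolev is the light-cone support of $\phi$ coming from the compactly supported data and finite speed of propagation (the estimate manifestly fails for the constant $\phi\equiv1$, for which $\|\phi_{\leq13}\|_{LE^1[T,2T]}\sim T^{1/2}$ while the right side of \eqref{ptdecayu} would then be $\la\tt-\rs\ra^{1/2}\la\tt\ra^{-1/2}$, which is $o(1)$ near the cone); the support condition lets one integrate radially outward to the cone over a range of length $\sim\la\tt-\rs\ra$, and it is this short integration length, not the equation, that supplies the extra decay for the undifferentiated terms. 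In short: your proposal correctly identifies the crux but leaves it as a conjecture, and the conjectured mechanism appears to be off; the actual argument lives in \cite{LT} and is not reproduced in this paper.
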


In particular, this implies 
\beq\label{ptwse1}
|\phi_{\leq |\alpha|}| \lesssim \varepsilon\la \tt\ra^{-1} \la \tt-\rs\ra^{1/2} 
\eq
for all $|\alpha|\leq N-19$.

One idea to continue here is to rewrite the equation as 
\[
\Box \phi = (\Box_\phi - \Box_K) \phi \pm \phi^p,
\]
and use the fundamental solution for the Minkowski, combined with \eqref{ptwse1}, to improve the rate of decay. This works for $p>5$, but not for smaller $p$. Instead we first use $r^p$ estimates inspired by the work of Dafermos-Rodnianski \cite{DR} to improve the pointwise decay.

Let $\pa_v = \pa_t + \pa_\rs$, and $\ang$ denote angular derivatives. We introduce the weighted local energy norm $LE_\ga$ for $\ga>0$: 
\begin{equation}
 \| \phi\|_{LE_\ga} =   \|  r^{\frac{\ga-1}2} \phi\|_{L^2(\M)} , 
\label{ledefp}\end{equation}
and its $H^1$ counterpart
\begin{equation}
  \| \phi\|_{LE_\ga^1} = \| \pa_v \phi\|_{LE_\ga} + \| \ang \phi\|_{LE_\ga} + \|  r^{-1} \phi\|_{LE_\ga}. 
\end{equation}
We also define the degenerate norm
\[
  \| \phi\|_{LE^1_{w,\ga}} = \| (1-\chi_{ps}) \pa_v \phi\|_{LE_\ga} + \| (1-\chi_{ps}) \ang \phi\|_{LE_\ga} + \|  r^{-1} \phi\|_{LE_\ga},
\]
and the weighted energy 
\[
E_\ga[\phi](\tt) = \int_{\Sigma(\tt)} r^\ga \left(|\pa_v \phi|^2 + |\ang\phi|^2 + r^{-2} \phi^2\right) d\Sigma_K(\tt).
\]

For the dual norm, we define
\begin{equation}
 \| f\|_{LE_{\ga}^*} = \| r^{\frac{\ga}2} f\|_{LE^*}. 
\label{lesdefp}\end{equation}

We will prove the following linear estimate:
\begin{theorem}\label{LEplin}
Assume that $\phi$ solves
\[
\Box_K \phi = F, \qquad \phi |_{\at=0} = \phi_0, \qquad \tilde T \phi |_{\at=0} = \phi_1\ .
\]
Then for any $0<\ga<2$ and compactly supported $(\phi_0, \phi_1)$ we have
\beq\label{rplin}
\sup _{\tt} E_\ga[\phi](\tt) + \| \phi\|_{LE^1_{w,\ga}}^2 \lesssim E[\phi](0) + E[\phi](T) + \| \phi\|^2_{LE^1_{w}[0,T]} + \|F\|^2_{LE_{\ga}^*}.
\eq
\end{theorem}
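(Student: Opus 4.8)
The plan is to run the classical Dafermos--Rodnianski $r^p$-weighted vector field multiplier method, adapted to the $\tt$-slicing and to the presence of trapping. First I would localize to the far region $r \ge R$ by means of a cutoff $\chi = \chi(r)$ equal to $1$ for $r \ge 2R$ and $0$ for $r \le R$; on the complementary region the weighted norms $LE_\ga^1$ and $LE_{w,\ga}^1$ are comparable to the unweighted $LE^1$ and $LE_w^1$ norms (since $r \approx 1$ there), so all the corresponding contributions are already controlled by $\|\phi\|_{LE^1_w[0,T]}^2 + E[\phi](0) + E[\phi](T)$ via the standard weak local energy estimate \eqref{wle}. Thus it suffices to prove the bound for $\psi := \chi\phi$, which satisfies $\Box_K \psi = \chi F + [\Box_K,\chi]\phi$, where the commutator is supported in $R \le r \le 2R$ and contributes a term bounded by $\|\phi\|_{LE^1_w[0,T]}$ on the right.

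Next, in the far region I would change to the null-type frame $(\tt, \rs, \omega)$ and write $\psi = r^{-1}\Psi$ (or work directly with $r\psi$), so that $\Box_K$ applied to $\psi$ takes, modulo lower-order and Schwarzschild-to-Kerr error terms of size $S^Z_{der}(r^{-2})$ as in \eqref{KSdiff}--\eqref{K-Sc}, the form $\pa_v\pa_{\uL}\Psi - \slashed\Delta\Psi + (\text{errors})$. The core computation is to multiply the equation for $\Psi$ by $r^{\ga-1}\pa_v\Psi$ (the $r^p$-multiplier with $p = \ga$) and integrate over the spacetime slab $\M \cap \{0 \le \tt \le T\}$, i.e.\ over the region between $\Sigma(0)$, $\Sigma(T)$, and $r = r_e$ (actually $r \ge R$ after the localization, so the $r = r_e$ boundary is absent and is replaced by the $r = R$ boundary, handled by the commutator term). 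Integration by parts produces: (a) the boundary term $E_\ga[\phi](T)$ on $\Sigma(T)$ with the good sign, minus $E_\ga[\phi](0)$ which by finite speed of propagation and compact support is actually a fixed multiple of $E[\phi](0)$ (the initial data has $\psi(0,\cdot)$ supported where $r \lesssim R_1$, so the $r^\ga$ weight is harmless); (b) the bulk spacetime term $\ga \int r^{\ga-1}\big(|\pa_v\Psi|^2 + \tfrac{2-\ga}{2}|\slashed\nabla\Psi|^2\big)$ with a good sign precisely because $0 < \ga < 2$ — this is what gives $\|\phi\|^2_{LE^1_{w,\ga}}$ on the left, with the $\pa_v$ and angular parts nondegenerate and the full gradient degenerating only near the trapped set (hence the $1-\chi_{ps}$ in the definition) and only for the $\pa_{\uL}$ direction which the $r^p$-method does not directly control; (c) error terms from the Kerr perturbation, the $\mu$-dependent cross terms in the metric, and the lower-order terms, all of which carry at least $r^{-2}$ relative decay and hence, after Cauchy--Schwarz, are absorbed into $\|\phi\|^2_{LE^1_w[0,T]}$ plus a small multiple of the left side; (d) the inhomogeneous term $\int r^{\ga-1}(\chi F + [\Box_K,\chi]\phi)\,\pa_v\Psi$, bounded by $\|F\|_{LE^*_\ga}$ times the left side via the dual-norm pairing $\|r^{\ga/2}f\|_{LE^*}\cdot\|r^{(\ga-1)/2}\partial_v\phi\|_{LE}$, plus the already-absorbed commutator.

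The main obstacle I expect is the handling of the trapped set and the $\pa_{\uL}$-derivative. The $r^p$-multiplier naturally controls $\pa_v\Psi$ and angular derivatives with $r^{\ga-1}$ weights, but to close a statement involving $LE^1_{w,\ga}$ one must also recover the remaining ingredients of the weak local energy norm in the bounded-$r$ regime, and there the multiplier gives nothing — this is exactly why the hypothesis includes $\|\phi\|^2_{LE^1_w[0,T]}$ and $E[\phi](T)$ on the right: they supply, via \eqref{wle}, control of $\phi$ on any fixed compact $r$-range without any derivative loss, and one uses a partition of unity in $r$ to glue the far-region $r^p$ estimate to this. A secondary technical point is that on Kerr (as opposed to Schwarzschild) the angular Laplacian in the separated wave operator is not quite the round one and the metric has the extra $d\tt\,d\phi_+$ and $dr\,d\phi_+$ cross terms; but by \eqref{KSdiff} these differ from Schwarzschild by $S^Z_{der}(r^{-2})$, so in the far region they only generate errors with an extra $r^{-1}$ of decay relative to the main terms, which is more than enough room to absorb them. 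Finally one must be slightly careful that the multiplier vector field $r^{\ga-1}\pa_v$ is only used where $\chi = 1$, so that no spurious boundary term at $r = R$ appears beyond the controlled commutator contribution; summing the far-region estimate and the compact-region estimate \eqref{wle} then yields \eqref{rplin}.
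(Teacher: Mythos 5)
Your plan follows the paper's proof essentially verbatim: localize to the far region with a radial cutoff, apply the Dafermos--Rodnianski $r^p$-multiplier (you phrase it via $\Psi = r\phi$ and $r^{\ga-1}\pa_v\Psi$, the paper via the equivalent energy-momentum-tensor triple $X = r^\ga\pa_v$, $q = r^{\ga-1}(1-\tfrac{2M}{r})$, $m = \ga(1-\delta)r^{\ga-2}dv$), treat Kerr perturbatively around Schwarzschild via \eqref{KSdiff}, absorb the interior and cutoff-commutator contributions with $\|\phi\|^2_{LE^1_w[0,T]}$ and $E[\phi](T)$, and close by Cauchy--Schwarz against $\|F\|_{LE^*_\ga}$. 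The one technical point your sketch glosses over is that for $1<\ga<2$ the zeroth-order bulk term $-\tfrac12\Box_{g_S}q \sim \ga(1-\ga)r^{\ga-3}$ has the wrong sign, which the paper corrects by inserting the one-form $m$ with $\delta$ chosen so that \eqref{delta} holds, thereby making the $(\pa_v\phi,\phi)$ quadratic form positive and yielding the $r^{\ga-3}\phi^2$ control directly; in your $\Psi$-formulation the analogous step is a Hardy-type inequality along outgoing null rays, which is routine but should be recorded.
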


A similar result appears in a paper by Stogin \cite{St}.

\begin{proof}

Recall that the energy-momentum tensor is given by
\[
Q_{\alpha\beta}[g]=\partial_\alpha \phi \partial_\beta \phi -
\frac{1}{2}g_{\alpha\beta}\partial^\mu \phi \partial_\mu \phi.
\]

Its contraction with respect to a vector field $X$ is denoted by
\[
P_\alpha[g,X]=Q_{\alpha\beta}[g]X^\beta.
\]
Its divergence is given by
\[
\nabla^\alpha P_\alpha[g,X] = \Box_g \phi \cdot X\phi + \frac{1}{2}Q[g, X], 
\qquad
Q[g, X] = Q_{\alpha\beta}[g]\pi_X^{\alpha \beta},
\]
where $\pi_X^{\alpha \beta} $ is the deformation tensor of $X$, which is given in terms of the Lie derivative by
\[
\pi_{\alpha\beta}^X=\nabla_\alpha X_\beta + \nabla_\beta X_\alpha=({\mathcal L}_X g)_{\alpha\beta}.
\]

In coordinates, one can write
\begin{equation}\label{defform}
Q[g, X] =-\frac{1}{\sqrt{|g|}}(X(\sqrt{|g|}g^{\alpha\beta}))
\partial_\alpha \phi\,\partial_\beta \phi \\
+(g^{\alpha\gamma}\partial_\gamma X^\beta +g^{\beta\gamma}\partial_\gamma X^\alpha)\partial_\alpha \phi\partial_\beta \phi- \partial_\gamma X^\gamma \, g^{\alpha\beta}\partial_\alpha \phi\,\partial_\beta \phi.
\end{equation}

More generally, for a vector field
$X$, a scalar function $q$ and a one-form $m$, we define
\[
P_\alpha[g, X, q, m] = P_\alpha[g,X] + q \phi \partial_\alpha \phi - \frac12
(\partial_\alpha q) \phi^2 + \frac{1}{2}m_{\alpha}\phi^2.
\]
The divergence of $P$ is
\begin{equation}
\nabla^\alpha P_\alpha[g,X,q, m] =  \Box_g \phi \Bigl(X\phi +
 q u\Bigr)+ Q[g,X,q, m],
\label{div}\end{equation}
where
\[
 Q[g,X,q, m] =
\frac{1}{2}Q[g, X] + q
\partial^\alpha \phi\, \partial_\alpha \phi  + m_\alpha \phi\,
\partial^\alpha \phi + (\nabla^\alpha m_\alpha -\frac{1}{2}
\nabla^\alpha \partial_\alpha q) \, \phi^2.
\]

Let $\M_{[0, T]} = \M \cap [0, T]\times \R^3$. The divergence theorem yields, assuming that $X$, $q$ and $m$ are supported in $\{r>4M\}$, that
\begin{equation}
\int_{\M_{[0, T]}} Q[g, X, q, m]  dV_g =
- \int_{\M_{[0, T]}} \Box_{K} \phi \left(X\phi + q \phi\right) dV_g
+ BDR^g,
\label{intdiv}\end{equation}
where $BDR^g$ denotes the boundary terms
\[
BDR^g = \int_{\Sigma(\tt)} \langle d\tt, P[g,X ,q, m]\rangle
  d\Sigma \Big|_{0}^{T}.
\]

Even though one can do the following computations in Kerr, it is easier to first do the Schwarzschild case, and treat Kerr perturbatively.

 Fix $\gamma<2$ and pick $0 < \delta$ small so that
\beq\label{delta}
(1-\delta)^2 - 2(1-\ga\delta) < 0.
\eq 
Let
\[
 X =  r^\ga \pa_v   ,\qquad q(r) =  r^{\gamma-1}\weight, \qquad m = \ga(1-\delta) r^{\gamma-2} dv.
\]

 We compute, using the fact that $\pa_{\rs} = \weight\pa_r$
\[\begin{split}
& P_0[g_S, X, q, m]  =  \frac{r^\ga}2 \left(|\pa_t \phi|^2 + |\pa_\rs \phi|^2 + \weight|\ang \phi|^2 + 2\pa_t\phi \pa_\rs \phi\right) + r^{\ga-1}\weight \phi\pa_t\phi +\frac{\ga(1-\delta)r^{\ga-2}}2\phi^2
\\ & = \frac{r^\ga}2 \left(|\pa_v \phi|^2 + \frac1r \weight\phi\right)^2 + \frac{r^\ga}2 \weight|\ang \phi|^2 - r^{\ga-1}\weight^2\left(\phi \pa_r\phi+ \frac{1}{2r}\phi^2\right)
+\frac{\ga(1-\delta)r^{\ga-2}}2\phi^2.
\end{split}\]

Recall that for the Schwarzschild metric $dV_S = r^2 d\tt drd\omega$, and $d\Sigma_S = r^2 drd\omega$. Let $R_2$ be large enough, and $\chi_{R_2}(r)$ be a smooth cutoff equal to $1$ when $r\geq R_2$ and supported in $\{r\geq \frac{R_2}2\}$. After integrating by parts we obtain
\[\begin{split}
& -\int_{\Sigma(\tt)} \la d\tt, P[g_S, \chi_{R_2} X, \chi_{R_2} q, \chi_{R_2} m]\ra d\Sigma_S = \int_{\Sigma(\tt)} \chi_{R_2}\weight^{-1} \left[\frac{r^\ga}2\left(\pa_v \phi + \frac1r \weight\phi\right)^2 \right. \\ & +  \frac{r^{\ga}}{2}  \weight |\ang \phi|^2 - r^{\ga-1}\weight^2\left(\phi \pa_r\phi+ \frac{1}{2r}\phi^2\right) + \frac{\ga(1-\delta)r^{\ga-2}}2\phi^2\Bigl.\Bigr] d\Sigma_S= \\ & \int_{\Sigma(\tt)} \chi_{R_2} \frac{r^\ga}2\left[\weight^{-1}\left(\pa_t \phi + \pa_\rs \phi + \frac1r \weight\phi\right)^2 + |\ang \phi|^2 + \left(\ga(2-\delta) -\frac{2M\ga}{r}\right)\frac{\phi^2}{r^2} \right] \\& + \chi_{R_2}^\prime \frac{r^{\ga-1}-2Mr^{\ga-2}}2 \phi^2 d\Sigma_S.
\end{split}\]

Since the support of $\chi_{R_2}^\prime$ is contained in $\{\frac{R_2}2\leq r\leq R_2\}$, we have by Hardy's inequality that
\[
\int_{\Sigma(\tt)} \chi_{R_2}^\prime \frac{r^{\ga-1}-2Mr^{\ga-2}}2 \phi^2 d\Sigma_S \lesssim E[\phi](\tt).
\]

Moreover, due to \eqref{KSdiff} we see that
\[
Q_{\alpha\beta}[g_K] - Q_{\alpha\beta}[g_S] \lesssim \frac{1}{r^2} |\pa\phi|^2, 
\]
which immediately implies, since $\gamma<2$, that
\[
\la d\tt, P[g_S, \chi_{R_2} X, \chi_{R_2} q, \chi_{R_2} m]\ra - \la d\tt, P[g_K, \chi_{R_2} X, \chi_{R_2} q, \chi_{R_2} m]\ra \lesssim r^{\gamma-2} |\pa\phi|^2 \lesssim |\pa\phi|^2.
\]

Finally, we have that $\sqrt{|g_K|}\approx \sqrt{|g_S|}$. 

We thus have 
\beq\label{bdryest}
E_\ga[\phi](\tt) \lesssim -\int_{\Sigma(\tt)}  \la d\tt, P[g_K, \chi_{R_2} X, \chi_{R_2} q, \chi_{R_2} m]\ra  d\Sigma_K + E[\phi](\tt). 
\eq

Moreover, since the initial data has compact support, we clearly have that when $\tt=0$:
\beq\label{inbdryest}
-\int_{\Sigma(0)}  \la d\tt, P[g_K, \chi_{R_2} X, \chi_{R_2} q, \chi_{R_2} m]\ra  d\Sigma_K \approx E[\phi](0).
\eq

We now compute the spacetime term. Using \eqref{defform} we have for any $f(r)$:
\[
Q[g_S, f(r)\pa_t] = 2f^{\prime} \weight\pa_t\phi\pa_r\phi
\]
\[\begin{split}
Q\left[g_S, \weight f(r)\pa_r\right] = & f^{\prime}(\pa_t\phi)^2 + f^{\prime}\weight^2(\pa_r\phi)^2 + \left[ 2f \left(1-\frac{3M}r\right) -\weight f^\prime\right] |\ang\phi|^2 \\ & -\frac{1}{2r} \weight f \pa^\ga \phi \pa_\ga \phi
\end{split}\]
and thus
\[
Q[g_S, X, q, 0] = \frac{\ga r^{\ga-1}}2 (\pa_v \phi)^2 + r^{\ga-1} \left(\frac{2-\ga}2+ \frac{(\ga-3)M}{r}\right)|\ang\phi|^2 - \frac12 \Box_{g_S} q.
\]
We also compute
\[
- \frac12 \Box_{g_S} q = \ga(1-\ga)r^{\ga-3}\left(1+O(\frac1r)\right),
\]
which unfortunately has the wrong sign when $\ga>1$. On the other hand, we have
\[
\nabla^\alpha m_\alpha = (1-\delta)\ga^2 r^{\ga-3}\left(1+O(\frac1r)\right)
\]
and thus
\[
\nabla^\alpha m_\alpha - \frac12 \Box_{g_S} q = \ga(1-\delta\ga)r^{\ga-3}\left(1+O(\frac1r)\right). 
\]

Due to \eqref{delta} we see that
\[
\frac{\ga r^{\ga-1}}2 (\pa_v \phi)^2 + \ga(1-\delta)r^{\ga-2} \phi\pa_v\phi + \ga(1-\delta\ga)r^{\ga-3}\phi^2 \gtrsim r^{\ga-1} (\pa_v \phi)^2 + r^{\ga-3} \phi^2
\]
and thus for $R_2$ large enough and $r\geq R_2$ we have
\[
Q[g_S, X, q, m] \gtrsim r^{\ga-1} \left(|\pa_v \phi|^2 + |\ang\phi|^2\right) + r^{\ga-3} \phi^2.
\]

Due to the support properties of $\chi_{R_2}$ we get
\beq\label{bulk}
\int_{\M_{[0, T]}} Q[g_S, \chi_{R_2} X, \chi_{R_2} q, \chi_{R_2} m] dV_K \gtrsim \| \phi\|^2_{LE^1_{w,\ga}[0, T]} - \| \phi\|^2_{LE^1_{w}[0,T]}. 
\eq

Using \eqref{KSdiff}  we see that 
\beq\label{intest}
Q[g_S, \chi_{R_2} X, \chi_{R_2} q, \chi_{R_2} m] - Q[g_K, \chi_{R_2} X, \chi_{R_2} q, \chi_{R_2} m] \lesssim \chi_{R_2/2} \left(r^{\ga-3} |\pa\phi|^2 + r^{\ga-4} |\phi|^2\right).
\eq

We thus obtain from \eqref{intdiv}, \eqref{bdryest}, \eqref{inbdryest}, \eqref{bulk} and \eqref{intest} that
\[
E_\ga[\phi](T) + \| \phi\|^2_{LE^1_{w,\ga}[0, T]} \lesssim \int_{\M_{[0, T]}} \left| (\Box_{K} \phi) \chi_{R_2} \left(X\phi + q \phi\right)\right| dV_K + E[\phi](0) + E[\phi](T) + \| \phi\|^2_{LE^1_{w}[0,T]}. 
\]

The result \eqref{rplin} now follows by Cauchy Schwarz.

\end{proof}

\newsection{Setup for pointwise estimates}

Note first that we can consider, instead of $\Box_K$, an operator that looks like $\Box$ (with respect to $\rs$) plus a long range perturbation; this will allow us to directly apply the results in  \cite{MTT}. Indeed, let 
\[
P = |g_K|^{1/4}(-g_K^{\tt\tt})^{-1/2}\Box_K (-g_K^{\tt\tt})^{-1/2} |g_K|^{-1/4}.
\]

$P$ is self-adjoint with respect to $d\tt dx$. More importantly, a quick computation yields that 
\[
P = \pa_{\alpha} \left(g_K^{\alpha\beta} (-g_K^{\tt\tt}) \partial_{\beta}\right) + V, \quad V = |g_K|^{1/4}(-g_K^{\tt\tt})^{-1/2} \Box_K \left((-g_K^{\tt\tt})^{-1/2} |g_K|^{-1/4}\right).
\]

It is easy to see that $V\in S^Z(r^{-3})$. Moreover, in Schwarzschild we have that for large $r$, $-g_S^{\tt\tt} = g_S^{r^*r^*}$ and $g_S^{\tt r^*} = 0$. We thus have
\[
P = \Box + P_{lr},  
\]
where the long range spherically symmetric part $P_{lr}$ has the form
\beq\label{Plr}
P_{lr} = g_{lr}(r)\Delta_{\omega} + V, \qquad g_{lr} \in S_{rad}^Z(r^{-3}), \qquad V \in S^Z(r^{-3}).
\eq

For the Kerr metric, using \eqref{KSdiff} yields
\beq\label{Pdec}
P = \Box + P_{lr} + P_{sr},
\eq
where the short-range part $P_{sr}$ has the form
\beq\label{Psr}
P_{sr} = \partial_\alpha g_{sr}^{\alpha \beta}\partial_\beta, \quad g_{sr}^{\alpha \beta} \in S^Z_{der}(r^{-2}).
\eq

We now see that $\phi$ satisfies
\[
P \phi = (-g_K^{\tt\tt}) \phi^5 + h_1 \phi + h_2  \pa\phi, \quad h_1\in S^Z(r^{-3}), \quad h_2 \in S^Z_{der}(r^{-2}).
\]
Now pick any $|\alpha|\leq N$. After commuting with vector fields, using \eqref{Pdec}, \eqref{Plr}, and \eqref{Psr}, we obtain
\beq\label{Peq}
P \phi_{\alpha} = F_{\alpha} + G_{\alpha},  \qquad \phi_{\alpha} |_{\at=0} = \phi_0^{\alpha}, \qquad \tilde T \phi_{\alpha} |_{\at=0} = \phi_1^{\alpha},
\eq
with
\beq\label{Fterm}
F_{\alpha} = \left((-g_K^{\tt\tt}) \phi^5\right)_{\alpha} \in S^Z(1) \sum_{\substack{|\alpha_1|\leq \dots\leq|\alpha_p|\leq |\alpha| \\ \alpha_1 +\dots\alpha_p=\alpha}}\prod_{j=1}^p \phi_{\alpha_j},
\eq
\beq\label{Gterm}
G_{\alpha} \in S^Z(r^{-3}) \phi_{\leq |\alpha|+6} + S^Z_{der}(r^{-2}) \pa \phi_{\leq |\alpha|+5},
\eq
and
\beq\label{indata}
\|\phi_0^{\alpha}\|_{H^1} + \|\phi_1^{\alpha}\|_{L^2} \lesssim \varepsilon.
\eq

We will use \eqref{Peq} to control the solution when $\rs$ is small.

On the other hand, for large $\rs$ it is more convenient to work with $\Box$ and treat the rest perturbatively. Let $\chi_{out}$ be a cutoff equal to $1$ for large $r$, and $\psi_{\alpha} = \chi_{out} \phi_{\alpha}$. Then $\psi_{\alpha}$ satisfies 
\beq\label{Boxeq}
\Box \psi_{\alpha} = F_{\alpha} + G_{\alpha},  \qquad \psi_{\alpha} |_{\at=0} = \tilde T \psi_{\alpha} |_{\at=0} = 0,
\eq
where $F_{\alpha}$ and $G_{\alpha}$ are supported away from $0$ and satisfy \eqref{Fterm}, \eqref{Gterm}.

We will decompose $\psi_{\alpha}$ as
\[
\psi_{\alpha} = \psi_1 + \psi_2,
\]
where 
\beq\label{psi1def}
\Box \psi_1 = G_{\alpha}, \qquad \psi_1 |_{\at=0} = 0, \qquad \pa_t \psi_1 |_{\at=0} = 0,
\eq

\beq\label{psi2def}
\Box \psi_2 = F_{\alpha}, \qquad \psi_1 |_{\at=0} = 0, \qquad \pa_t \psi_1 |_{\at=0} = 0.
\eq

By finite speed of propagation, $F_{\alpha}$ and $G_{\alpha}$ are supported in the forward light cone $\{|x| < \tt+CR_1\}$. By a time translation, we may assume that $F_{\alpha}$ and $G_{\alpha}$ are supported in the forward light cone $\{|x| < \tt\}$.

Finally, in the next sections $n$ will represent a  large constant,
which does not depend on $\alpha$, but may increase from one estimate to the next.

\medskip

\newsection{Estimates for the fundamental solution}

The goal of this section is to prove pointwise estimates for solutions to the inhomogeneous wave equation on Minkowski backgrounds.

We will first prove the following lemma, which gives pointwise bounds for the solution assuming the inhomogeneity lies in certain weighted $L^\infty$ spaces.

For any $\beta, \gamma, \eta\in\R$, we define the weighted $L^\infty$ norms
\beq\label{Linftywght}
\|G\|_{L_{\beta,\gamma,\eta}^{\infty}} = \| \la r\ra^{\beta} \la t\ra^{\gamma} \la t-r\ra^{\eta}  H(t, r)\|_{L_{t,r}^{\infty}}, \quad H(t,r)= \sum_0^2 \|\Omega^i G (t, r\omega)\|_{L^2(\S^2)}.
\eq
 
\begin{lemma}\label{Minkdcy}
i) Let $\psi$ solve 
\beq\label{Mink1}
\Box \psi = G , \qquad \psi(0) = 0, \quad \pa_t \psi(0) = 0,
\eq
where $G$ is supported in $\{|x| \leq t\}$. For any $1<\beta\leq 3$ and $\eta\neq 1$ we have:

\beq\label{lindcy1}
\psi(t, x)\lesssim \frac{1}{\la r\ra\la t-r\ra^{\beta+\tilde\eta}} \|G\|_{L_{\beta,1,\eta}^{\infty}}, 
\eq

where we define, for any arbitrary $\delta>0$,
\[
\tilde\eta = \left\{ \begin{array}{cc} \eta-\delta -2& \eta<1 ,  \cr -1 & \eta > 1 
  \end{array} \right. .
\]

ii) Let $\psi$ solve 
\beq\label{Mink2}
\Box \psi = \pa_t \tG, \qquad \psi(0) = 0, \quad \pa_t \psi(0) = 0,
\eq
where $\tG$ is supported in $\{t/2 \leq |x| \leq t\}$. For any $2<\beta\leq 3$ and $\eta\neq 1$ we have:

\beq\label{lindcy2}
\psi(t, x)\lesssim \frac{1}{\la r\ra\la t-r\ra^{1+\tilde\eta}}\left(\|\tG\|_{L_{\beta-1,1,\eta}^{\infty}} + \|S\tG\|_{L_{\beta-1,1,\eta}^{\infty}} + \|\Omega\tG\|_{L_{\beta-1,1,\eta}^{\infty}}+ \|\la t-r\ra \pa \tG|\|_{L_{\beta-1,1,\eta}^{\infty}} \right).
\eq

\end{lemma}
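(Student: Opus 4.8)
The plan is to use the explicit fundamental solution for the wave equation in $\R^{1+3}$ and exploit that $G$ (resp.\ $\tG$) is supported in a forward light cone. For part (i), I would start from the Kirchhoff/Duhamel formula
\[
\psi(t,x) = \frac{1}{4\pi}\int_0^t \frac{1}{t-s}\int_{|y-x|=t-s} G(s,y)\, dS(y)\, ds,
\]
and first reduce to the spherically symmetric situation: using the angular-derivative norm $H(t,r)=\sum_0^2\|\Omega^iG(t,r\omega)\|_{L^2(\S^2)}$ built into \eqref{Linftywght}, Sobolev embedding on $\S^2$ converts the surface integral of $G$ into an $L^\infty_\omega$ bound controlled by $H$, at the cost of two angular derivatives. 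After this reduction the problem becomes a one-dimensional integral: one writes the spherical means and is left with estimating
\[
\psi(t,r)\lesssim \frac{1}{r}\int\!\!\int_{\mathcal{D}} \lambda\, H(s,\lambda)\, d\lambda\, ds,
\]
where $\mathcal D$ is the intersection of the backward light cone from $(t,r)$ with the support region $\{\lambda\le s\}$ (the standard $(s,\lambda)$-region $|t-r|\le \lambda+\, \cdot\,$, with $|s-t|$ comparable to distances). One then inserts the pointwise bound $H(s,\lambda)\lesssim \la\lambda\ra^{-\beta}\la s\ra^{-1}\la s-\lambda\ra^{-\eta}\|G\|_{L^\infty_{\beta,1,\eta}}$ and carries out the two integrations. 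The $s$-integral is harmless because of the $\la s\ra^{-1}$ weight and the finite range; the decisive computation is the $\lambda$-integral of $\la\lambda\ra^{1-\beta}\la s-\lambda\ra^{-\eta}$ over the relevant range, whose behavior splits exactly according to whether $\eta<1$ (integral grows, producing the loss $\la t-r\ra^{\eta-2-\delta}$ with the $\delta$ absorbing a logarithm at $\eta=1^-$, hence the hypothesis $\eta\ne1$) or $\eta>1$ (integral converges, giving the clean $\la t-r\ra^{-1}$). The constraint $1<\beta\le3$ is what makes $\la\lambda\ra^{1-\beta}$ integrable at infinity while still allowing the $1/r$ extraction.

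For part (ii) the extra structure is that the inhomogeneity is $\pa_t\tG$ with $\tG$ supported in the thin region $\{t/2\le|x|\le t\}$, i.e.\ essentially on the light cone. The idea is to integrate by parts in $s$ in the Duhamel formula to move the $\pa_t$ off $\tG$; since $d/ds$ acting on $\tG(s,y)$ along the backward cone $|y-x|=t-s$ also hits the radius, one rewrites $\pa_s\big(\tG(s,y)\big)$ in terms of $\pa_t\tG$, radial derivatives, and ultimately the vector fields $S=t\pa_t+r\pa_r$, $\Omega$, and $\pa\tG$ with the extra $\la t-r\ra$ weight—this is precisely why the right-hand side of \eqref{lindcy2} features $\tG$, $S\tG$, $\Omega\tG$, and $\la t-r\ra\pa\tG$. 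After the integration by parts the boundary term vanishes (the data are zero and $\tG$ is supported away from $s=0$ in the cone), and one is reduced to an integral of the same type as in (i) but with $\tG$ in place of $G$, one power of $r$ better in the spatial weight ($\beta-1$ instead of $\beta$), and the gain that the $\lambda$-support has width $O(\la s-\lambda\ra)$ rather than the full range. Tracking these gains carefully yields the stated $\la r\ra^{-1}\la t-r\ra^{-1-\tilde\eta}$ with the same case split in $\tilde\eta$; the requirement $\beta>2$ here compensates for the loss of one power of $r$ in the weight.

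The main obstacle I expect is the bookkeeping of the $(s,\lambda)$ double integral near the light cone $\lambda=s$, where both the support cutoff $\{\lambda\le s\}$ and the weight $\la s-\lambda\ra^{-\eta}$ degenerate: one must be careful to split the domain into the region $\la s-\lambda\ra\lesssim\la t-r\ra$ and its complement, handle $\eta<1$ versus $\eta>1$ separately, and verify that the borderline logarithmic divergence at $\eta=1$ is the only one, justifying both the exclusion $\eta\ne1$ and the appearance of the arbitrary $\delta>0$ loss. A secondary technical point is justifying the integration by parts in part (ii) rigorously given the cutoff structure of $\tG$, and checking that commuting the spherical-mean reduction with that integration by parts only costs the stated vector fields $S$, $\Omega$ and no more than two angular derivatives.
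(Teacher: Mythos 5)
For part (i) you are following the same strategy as the paper: reduce to a radial problem using the angular $L^2(\S^2)$ norms $H$ and Sobolev embedding on the sphere, write the radial solution as a double integral over the characteristic rectangle $D_{tr}$, and estimate by splitting into regions according to the sizes of $\rho$ and $s-\rho$ with the case distinction $\eta<1$ versus $\eta>1$. The paper additionally passes through the radial majorant $v$ solving $\Box v = H$ and uses positivity of the fundamental solution (rather than taking spherical means of the Kirchhoff representation directly), but this is a cosmetic difference; the resulting integral and case analysis are the same as what you describe.

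For part (ii) you take a genuinely different route, and this is where there is a gap. The paper's proof does \emph{not} integrate by parts in the Duhamel representation. It sets $\tpsi$ to be the solution of $\Box\tpsi=\tG$, observes $\psi=\pa_t\tpsi$, and then applies part (i) directly to $\tpsi$, $\pa\tpsi$, $\Omega\tpsi$, $S\tpsi$, and the Lorentz boosts $(t\pa_i+x_i\pa_t)\tpsi$, using the fact that these vector fields commute well with $\Box$ so that the new sources are $\tG$, $\pa\tG$, $\Omega\tG$, $S\tG$, $(t\pa_i+x_i\pa_t)\tG$. The identity $(t\pa_i+x_i\pa_t)\tG\lesssim |S\tG|+|\Omega\tG|+\la t-r\ra|\pa_r\tG|$, valid on the support of $\tG$ near the light cone, controls the boost source, and the same algebraic identity applied at the evaluation point extracts $\la t-r\ra|\pa_t\tpsi|$ from the combination of $\tpsi$, $S\tpsi$, $\Omega\tpsi$, $(t\pa_i+x_i\pa_t)\tpsi$. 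This is entirely a pointwise vector field argument on top of part (i); no new integral needs to be estimated.

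Your integration-by-parts approach produces, after IBP in $s$, two terms: one where $\tG$ is integrated over the backward cone with an extra $1/(t-s)$ in the kernel, and one involving $\omega\cdot\nabla\tG$ with $\omega=(y-x)/|y-x|$. Neither is literally ``an integral of the same type as in (i) with $\tG$ in place of $G$.'' More importantly, the derivative $\omega\cdot\nabla$ is radial \emph{from the evaluation point $x$}, not from the origin, so it is not a combination of the origin-centered vector fields $S$, $\Omega$, $\pa_r$ with bounded coefficients, and there is no straightforward rewriting of this term in terms of $S\tG$, $\Omega\tG$, $\la t-r\ra\pa\tG$ as you claim. One can of course bound $|\omega\cdot\nabla\tG|\le|\pa\tG|$ and try to push the estimate through directly, but then the $S\tG$ and $\Omega\tG$ terms on the right side of \eqref{lindcy2} play no role, which signals that this is not the intended proof and that the remaining integrals need to be estimated from scratch rather than by citing part (i). So the IBP route is not wrong in spirit, but the key step you describe — converting the cone-tangential derivative into the stated vector fields — is not justified, and the paper's commutator argument sidesteps this difficulty entirely.
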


We remark here that \eqref{lindcy1} is similar to previous classical results, see for instance \cite{John}, \cite{As}, \cite{STz}, \cite{Sz2}.
\begin{proof}

We use the ideas from \cite{MTT}. Let us first prove \eqref{lindcy1}. Define
\begin{equation}\label{Hdef}
  H(t,r)= \sum_0^2 \|\Omega^i G (t, r\omega)\|_{L^2(\S^2)}.
\end{equation}
By Sobolev embeddings on the sphere, we have $|G|\lesssim H$. Let $v$ be the radial solution to
\begin{equation}\label{1dbox}
  \Box v = H, \qquad v[0]=0.
\end{equation}

By the positivity of the fundamental solution, we have that $|\psi| \lesssim |v|$. On the other hand, we can write $v$ explicitly:
\begin{equation}\label{Hsol}
rv(t,r) = \frac12 \int_{D_{tr}} \rho H(s,\rho) ds d\rho,
\end{equation}
where $D_{tr}$ is the rectangle 
\[
D_{tr}=\{ 0 \leq s - \rho \leq t-r, \quad  t-r \leq s+\rho \leq t+r 
 \}.
\]

We partition the set $D_{tr}$ into a double dyadic manner
as 
\[
D_{tr} = \bigcup_{R \leq t}  D_{tr}^R, \quad D_{tr}^R = D_{tr}\cap \{R<r<2R\}
\]
and estimate the corresponding parts of the above integral.

We clearly have
\[
\int_{D_{tr}^R} \rho H ds d\rho \lesssim \|G\|_{L_{\beta, 1,\eta}^{\infty}} \int_{D_{tr}^R} \rho^{1-\beta} \la s\ra^{-1} \la s-\rho\ra^{-\eta} d\rho ds. 
\]

We now consider two cases:

(i) $R < (t-r)/8$. Here we have $\rho \sim R$ and $s\approx s-\rho \approx \la t-r\ra$;
therefore we obtain

\[
\int_{D_{tr}^R} \rho^{1-\beta} \la s\ra^{-1} \la s-\rho\ra^{-\eta} d\rho ds  \lesssim  R^{3-\beta} \la t-r\ra^{-1-\eta}, 
\]
and after summation, using that $\beta\leq 3$, we obtain
\beq\label{ptwsecpt}
\sum_{R < (t-r)/8} \int_{D_{tr}^R} \rho H ds d\rho \lesssim \frac{\ln\la t-r\ra \la t-r\ra^{3-\beta} }{\la t-r\ra^{1+\eta}} \lesssim \frac{1}{\la t-r\ra^{\beta+\tilde\eta}}.
\eq

(ii) $(t-r)/ 8 < R < t$. Here we have $\rho \sim R$ and $s\gtrsim R$. Denote $u=s-\rho$; then
\[
\int_{D_{tr}^R} \rho^{1-\beta} \la s\ra^{-1} \la s-\rho\ra^{-\eta} d\rho ds  \lesssim R^{1-\beta} \int_{0}^{t-r} \la u\ra^{-\eta} du \lesssim R^{1-\beta} \la t-r\ra^{\mu(\eta)},
\]
where 
\[
\mu(\eta)= \left\{ \begin{array}{cc} 1-\eta& \eta<1 ,  \cr 0 & \eta > 1 
  \end{array} \right. .
\]
Since $\beta>1$, we obtain after summation
\beq\label{ptwsefar}
\sum_{R > (t-r)/8} \int_{D_{tr}^R} \rho H ds d\rho \lesssim \la t-r\ra^{1-\beta+\mu(\eta)}.
\eq

The conclusion \eqref{lindcy1} follows from \eqref{ptwsecpt} and \eqref{ptwsefar}.

We now prove \eqref{lindcy2}. Let $\tpsi$ be the solution to 
\beq\label{Mink}
\Box \tpsi = \tG, \qquad \tpsi[0] = 0.
\eq

Clearly $\psi = \pa_t \tpsi$. We also note that in the support of $\tG$ we have
\[
(t \partial_i + x_i \partial_t) \tG \lesssim |S\tG| + |\Omega \tG| + \la t-r\ra |\pa_r \tG|.
\]

By \eqref{lindcy1}, applied to $\tpsi$, $\nabla \tpsi$,  
$\Omega \tpsi$, $S \tpsi$ and $(t \partial_i
+ x_i \partial_t) \tpsi$ we obtain 
\[
|\tpsi|+|\nabla \tpsi| +|S\tpsi|+|\Omega \tpsi|  + \sum_i  | (t \partial_i
+ x_i \partial_t) \tpsi|  \lesssim \frac{1}{\la r\ra\la t-r\ra^{\beta+\tilde\eta-3}} \|\tG\|_{L_{\beta-1,1,\eta}^{\infty}}. 
\]
The above left hand side dominates $\la t-r\ra \partial_t \tpsi$;
therefore the proof of the lemma is complete.

\end{proof}

We will also use the following lemma, which gives pointwise control of $\psi_2$ by the $r^p$ norms of Section 4.
\begin{lemma}\label{ptwsedcyrp}
For some $\alpha$, let $\psi_2$ solve \eqref{psi2def}, and assume also that \eqref{ptwse1} holds. We then have for any $\ga<2$, that
\beq\label{nonlindcy}
\psi_2(\tt, x) \lesssim \frac{1}{\la r\ra (t-\rs)^{\ga-\frac32}}\left\|r^{-1}\phi_{\leq |\alpha|+ 6}\right\|_{LE_\ga}^2.
\eq
\end{lemma}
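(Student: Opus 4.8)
The plan is to estimate $\psi_2$ using the same representation formula for the Minkowski fundamental solution as in Lemma \ref{Minkdcy}, but rather than feeding in a weighted $L^\infty$ bound on the source $F_\alpha$, we feed in the weighted $L^2$ control coming from the $r^p$ estimates of Section 4. Recall from \eqref{Fterm} that $F_\alpha \lesssim \sum \prod_{j=1}^p \phi_{\alpha_j}$ with $\sum \alpha_j = \alpha$ and each $|\alpha_j|\le|\alpha|$; since $p\ge 3$ with $p=5$ in the reduced equation, we want to peel off $p-2 \ge 1$ factors in $L^\infty$ using the a priori pointwise bound \eqref{ptwse1}, leaving a quadratic expression $\phi_{\leq|\alpha|}\cdot\phi_{\leq|\alpha|}$ to be controlled in $L^2$ by the $LE_\ga$ norm. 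First I would dominate $F_\alpha$ by a radial function $H(s,\rho)$ as in \eqref{Hdef}, so that by positivity of the fundamental solution $|\psi_2|\lesssim|v|$ with $rv(t,r)=\frac12\int_{D_{tr}}\rho H\,dsd\rho$ over the same rectangle $D_{tr}$.

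Next I would estimate $\int_{D_{tr}^R}\rho H\,dsd\rho$ on each dyadic piece $\{R<\rho<2R\}$. Here, instead of pulling all of $H$ out in $L^\infty$, I apply Cauchy--Schwarz in $(s,\rho)$ on the strip $D_{tr}^R$: the width of the strip in the $s-\rho$ direction is $\lesssim \la t-r\ra$ and in the $s+\rho$ direction is $\lesssim R$, so $|D_{tr}^R|\lesssim R\la t-r\ra$. Bounding $p-2$ of the factors of $\phi$ pointwise by \eqref{ptwse1}, namely $\la s\ra^{-1}\la s-\rho\ra^{1/2}\lesssim \la s\ra^{-1/2}$ on the relevant region, and writing the remaining $\phi^2$-part against the measure $\rho\,ds\,d\rho$ with the weight $\rho^{\ga-1}$ absorbed into $\|r^{-1}\phi_{\le|\alpha|+6}\|_{LE_\ga}^2 = \|r^{(\ga-1)/2}\cdot r^{-1}\phi\|_{L^2}^2 = \|r^{(\ga-3)/2}\phi\|_{L^2}^2$, I get something like
\[
\int_{D_{tr}^R}\rho H\,dsd\rho \lesssim \Bigl(\sup_{D_{tr}^R}(\text{weights})\Bigr)\cdot |D_{tr}^R|^{1/2}\cdot \bigl\|r^{-1}\phi_{\le|\alpha|+6}\bigr\|_{LE_\ga}^2,
\]
where the surviving power of $R$ should come out favorably because $\ga<2$ keeps the $\rho$-integral convergent at large $R$, and the $\la t-r\ra$ factors collect to give exactly the claimed $\la t-\rs\ra^{\ga-3/2}$ after I sum the geometric series in $R$ over $R\lesssim t$ (splitting as before into $R<(t-r)/8$ and $(t-r)/8<R<t$). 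Dividing through by $r$ from the $rv$ on the left yields \eqref{nonlindcy}.

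The main obstacle I anticipate is bookkeeping the weights so that the $L^2$ norm that appears is genuinely the $LE_\ga$ norm with the same $\ga$ — in particular making sure that the power of $r$ left over after extracting $p-2$ pointwise factors and after Cauchy--Schwarz matches $r^{(\ga-3)/2}$ inside the $L^2$, and that the contribution from the near-cone region $R<(t-r)/8$ (where $\rho\sim R$ is small and the $\rho^{\ga-1}$ weight is the dangerous one) does not produce a logarithmic loss or a wrong power of $\la t-r\ra$. A secondary point is that $F_\alpha$ is only supported in $\{|x|<\tt\}$, not in the annulus $\{t/2\le|x|\le t\}$, so I cannot invoke part (ii) of Lemma \ref{Minkdcy}; I must work directly with the integral formula \eqref{Hsol} and the $L^2$-based dyadic estimate throughout, being careful that the $p-2$ factors estimated by \eqref{ptwse1} are applied on the full support $D_{tr}$ and not just on an annulus. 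Modulo these weight computations, the structure is a routine adaptation of the argument already carried out for \eqref{lindcy1}.
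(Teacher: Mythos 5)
Your high-level plan matches the paper's: dominate $F_\alpha$ by a radial majorant $H$ involving $\|\phi_{\le|\alpha|}\|_{L^\infty(\S^2)}^{p-2}\|\phi_{\le|\alpha|+6}\|_{L^2(\S^2)}^2$, pass to the radial solution $v$ with $rv=\frac12\int_{D_{tr}}\rho H$, dyadically decompose in $\rho$, peel off $p-2$ factors via \eqref{ptwse1}, and absorb the remaining $\phi^2$ into $\|r^{-1}\phi_{\le|\alpha|+6}\|_{LE_\ga}^2$. You also correctly note that Lemma~\ref{Minkdcy}(ii) is unavailable because of the support assumption.

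However, the key estimating step as you wrote it does not go through. Cauchy--Schwarz against $|D_{tr}^R|^{1/2}$ applied to $\int\rho H$ would pair $\rho H$ with the constant function, producing $\|\rho H\|_{L^2(D_{tr}^R)}$ on the other side; since $H$ already contains $\|\phi\|_{L^2(\S^2)}^2$, squaring yields a fourth power of $\phi$, which cannot be identified with the (quadratic) $LE_\ga$ norm. Your displayed inequality, with $|D_{tr}^R|^{1/2}$ \emph{and} the full square $\|r^{-1}\phi\|_{LE_\ga}^2$ simultaneously, is not what Cauchy--Schwarz gives and would in any case introduce an extraneous $R^{1/2}\la t-r\ra^{1/2}$ that grows and would destroy the claimed decay. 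What actually closes the argument — and this is what the paper does, and what you gesture at in the phrase ``writing the remaining $\phi^2$-part against the measure'' but do not carry out — is a \emph{direct} weight redistribution with no Cauchy--Schwarz at all: write $\rho=\rho^{2-\ga}\cdot\rho^{\ga-3}\cdot\rho^2$, bound $\rho^{2-\ga}\lesssim R^{2-\ga}$, and recognize $\int \rho^{\ga-3}\|\phi_{\le|\alpha|+6}\|_{L^2(\S^2)}^2\,\rho^2\,d\rho\,ds$ as exactly $\|r^{-1}\phi_{\le|\alpha|+6}\|_{LE_\ga}^2$. Then on $R<(\tt-\rs)/8$ one pulls out $\frac{\la s-\rho\ra^{1/2}}{\la s\ra}\lesssim(\tt-\rs)^{-1/2}$ and sums $R^{2-\ga}$ (geometric since $\ga<2$, dominated by the top, hence no logarithm — the loss you worried about does not arise), while on $R>(\tt-\rs)/8$ one instead bounds $\frac{\rho^{2-\ga}\la s-\rho\ra^{1/2}}{s}\lesssim R^{-\delta}(\tt-\rs)^{3/2-\ga+\delta}$ and sums the convergent $R^{-\delta}$. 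Both regions give $(\tt-\rs)^{3/2-\ga}$. This bookkeeping is precisely what produces the stated power and is where your proposal stops short; as written, the central inequality is wrong and the exponent computation is deferred rather than established.
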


\begin{proof}

The proof is similar to that of Lemma~\ref{Minkdcy}. Define
\begin{equation}\label{Hdef2}
  H(\tt,\rs)= \|\phi_{\leq |\alpha|}\|^{p-2}_{L^{\infty}(\S^2)} \|\phi_{\leq |\alpha|+6}\|_{L^2(\S^2)}^2.
\end{equation}
By Sobolev embeddings on the sphere, we have $|F_{\alpha}|\lesssim H$. Let $v$ be the radial solution to
\[
  \Box v = H, \qquad v[0]=0.
\]

By the positivity of the fundamental solution, we have that $|\phi_2| \lesssim |v|$. On the other hand, we can write $v$ explicitly as in \eqref{Hsol}:
\[
rv(t,r) = \frac12 \int_{D_{tr}} \rho H(s,\rho) ds d\rho
\]
where $D_{tr}$ is the rectangle 
\[
D_{tr}=\{ 0 \leq s - \rho \leq \tt-\rs, \quad  \tt-\rs \leq s+\rho \leq \tt+\rs 
 \}.
\]

Moreover, due to \eqref{ptwse1}, and the fact that $p\geq 3$, we see that
\beq\label{Hbd}
H(\tt, \rs) \lesssim \frac{\la \tt-\rs\ra^{1/2}}{\la \tt\ra} \|\phi_{\leq |\alpha|+6}\|_{L^2(\S^2)}^2.
\eq

We again consider two cases:

(i) $R < (\tt-\rs)/8$. Since $\rho, \la s-\rho \ra\lesssim \la s\ra$ and $s \gtrsim \tt-\rs $, we have
\[\begin{split}
\int_{D_{tr}^R} \rho H ds d\rho & \lesssim \frac{1}{(\tt-\rs)^{1/2}} \int_{D_{tr}^R} \rho\|\phi_{\leq |\alpha|+6}\|_{L^2(\S^2)}^2 d\rho ds \lesssim \frac{R^{2-\ga}}{(\tt-\rs)^{1/2}} \int_{D_{tr}^R} \rho^{\ga-3} \|\phi_{\leq |\alpha|+6}\|_{L^2(\S^2)}^2 \rho^2 d\rho ds \\& \lesssim \frac{R^{2-\ga}}{(\tt-\rs)^{1/2}} \left\|r^{-1}\phi_{\leq |\alpha|+ 6}\right\|_{LE_\ga}^2.
\end{split}\]

(ii) $(\tt-\rs)/ 8 < R < \tt$. In this case, since $\tt-\rs \lesssim\rho\leq s$, we have for  small $0<\delta$ that
\[
\frac{\rho^{2-\ga}\la s-\rho\ra^{1/2}}{s} \lesssim \frac{R^{-\delta}}{(\tt-\rs)^{\ga-3/2-\delta}},
\]
and thus 
\[
\int_{D_{tr}^R} \rho H ds d\rho \lesssim \int_{D_{tr}^R} \frac{\rho^{2-\ga}\la s-\rho\ra^{1/2}}{s} \rho^{\ga-3} \|\phi_{\leq |\alpha|+6}\|_{L^2(\S^2)}^2 \rho^2 d\rho ds \lesssim \frac{R^{-\delta}}{(\tt-\rs)^{\ga-3/2-\delta}} \left\|r^{-1}\phi_{\leq |\alpha|+ 6}\right\|_{LE_\ga}^2.
\]

The conclusion now follows after summing over $R$. 

Finally, in the next section we will use the following result, see Lemma 3.10 from \cite{MTT}, to improve the bounds for $\psi_1$:
\begin{lemma}\label{MinkdcyLE}
Assume that $\psi$ solves \eqref{Mink1}. The following estimate holds for large enough $n$:
\begin{equation}\label{firstdecestderiv}
  \psi(t, x)\lesssim \frac{\log \la t-r\ra}{\la r\ra \la t-r\ra^{\frac{1}{2}}}
 \|r G_{\leq n}\|_{LE^*}.
\end{equation}
\end{lemma}

\end{proof}

\medskip

\newsection{An improved bound}

We now pick $n$ suitably large, and assume that $m+n \ll N$. The main goal of the section is to prove that, for any $0< q<\frac12$, we have
\beq\label{ptwsein}
\phi_{\leq m+n} \lesssim \varepsilon\la r\ra^{-1} \la \tt-\rs\ra^{-q}.  
\eq

Clearly this holds for $r\lesssim 1$ by \eqref{ptwse1}. For $r$ large, this will follow by a continuity argument. We will assume that the following a-priori bounds hold for some large constant $\tilde C$ independent of $\varepsilon$ and $\tt$:
\begin{equation}\label{enapbds}
|\phi_{\leq m+n}| \leq \tilde C \varepsilon\la r\ra^{-1} \la \tt-\rs\ra^{-q}. 
\end{equation}

Clearly such a bound holds for small $\tt$ by Sobolev embeddings and the compact support of the initial data. We now assume that \eqref{enapbds} holds for all $0\leq \tt\leq T$, and we will improve the constant on the right hand side by a factor of $\frac12$. For the rest of the section, the implicit constants will not depend on $\tilde C$.

We will now show, that under assumption \eqref{enapbds}, we have that the solution to \eqref{nlw} satisfies, for small enough $\varepsilon$,
\beq\label{rpnonlin}
\sup _{0\leq\tt\leq T} E_\ga[\phi_{\leq m+n+6}](\tt) + \| \phi_{\leq m+n+6}\|^2_{LE_\ga^1[0, T]} \lesssim \varepsilon^2.
\eq

Indeed, pick any $|\alpha|\leq m+n+6$ and apply Theorem~\ref{LEplin} with $\ga= \frac32 + q$ to $\phi_{\alpha}$. We obtain, using also \eqref{comm}
\beq\label{rpcomm}
\begin{split}
& \sup _{0\leq\tt\leq T} E_\ga[\phi_{\alpha}](\tt) + \| \phi_{\alpha}\|^2_{LE_\ga^1[0, T]}  \lesssim   E_[\phi_{\alpha}](0) + E[\phi_{\alpha}](T) + \| \phi_{\alpha}\|^2_{LE^1_{w}[0,T]} \\ & + \|(\phi^p)_{\alpha}\|_{LE_{\ga}^*[0,T]}^2 + \|r^{-2+} \pa\phi_{\leq |\alpha|+1}\|^2_{LE_{\ga}^*[0,T]}.
\end{split}\eq
By \eqref{derivloss} we have that
\[
E[\phi_{\alpha}](T) + \| \phi_{\alpha}\|^2_{LE^1_{w}[0,T]} \lesssim \varepsilon^2.
\]
Since $\ga<2$, we have 
\beq\label{vfcomp}
\|r^{-2+} \pa\phi_{\leq |\alpha|+1}\|^2_{LE_{\ga}^*[0,T]} \lesssim \|\phi_{\leq m+n+9}\|^2_{LE^1} \lesssim \varepsilon^2.
\eq

Moreover, \eqref{nonlinvf} and \eqref{enapbds} yield
\[\begin{split}
\|(\phi^p)_{\alpha}\|^2_{LE_{\ga}^*[0,T]}& \lesssim (\tilde C \varepsilon)^{2(p-1)} \|\left(\la r\ra^{-1} \la \tt-\rs\ra^{-\ga}\right)^{p-1}\phi_{\leq |\alpha|}\|^2_{LE_{\ga}^*[0,T]} \leq (\tilde C \varepsilon)^{2(p-1)} \|\la r\ra^{-2} \phi_{\leq |\alpha|}\|^2_{LE_{\ga}^*[0,T]} \\ & \leq (\tilde C \varepsilon)^{2(p-1)} \|\phi_{\leq |\alpha|}\|^2_{LE_\ga^1[0, T]}.
\end{split}\]
and this term can be absorbed in the LHS of \eqref{rpcomm} for small enough $\varepsilon$. The conclusion \eqref{rpnonlin} now follows.

In particular, we have that
\beq\label{lotrp}
\| \la r\ra^{-1} \phi_{\leq m+n+6} \|_{LE_\ga} \lesssim \varepsilon.
\eq

We now finish the argument. By Lemma~\ref{MinkdcyLE}, \eqref{Gterm} and \eqref{derivloss}, we obtain
\beq\label{psi1in}
\psi_1 \lesssim \frac{\log \la t-r\ra}{\la r\ra \la t-r\ra^{\frac{1}{2}}}
 \|r (G_{\alpha|})_{\leq n}\|_{LE^*} \lesssim \frac{1}{\la r\ra \la t-r\ra^{q}} \|\phi_{m+n}\|_{LE^1} \lesssim \frac{\varepsilon}{\la r\ra \la t-r\ra^{q}}. 
\eq
Moreover, \eqref{lotrp} and \eqref{nonlindcy} yield
\beq\label{psi2in}
\psi_2 \lesssim \frac{1}{\la r\ra (t-\rs)^{q}}\left\|r^{-1}\phi_{\leq |\alpha|+ 6}\right\|_{LE_\ga}^2 \lesssim \frac{\varepsilon^2}{\la r\ra (t-\rs)^{q}}.
\eq

Then \eqref{psi1in} and \eqref{psi2in} imply that
\[
\psi \lesssim \frac{\varepsilon}{\la r\ra \la t-r\ra^{q}}, 
\]
and the conclusion follows if $\tilde C$ is large enough.

\medskip

\newsection{Improved estimates in the interior and for derivatives }

For the forward cone $C = \{ r \leq t\}\cap\M$ we consider  a dyadic decomposition
 in time into sets
\[
C_{T} = \{ T \leq t \leq 2T, \ \ r \leq t\} \cap\M.
\]
For each $C_T$ we need a further double dyadic decomposition of it
with respect to either the size of $t-r$ or the size of $r$, depending
on whether we are close or far from the cone,
\[
C_{T} = \bigcup_{1\leq  R \leq T/4}  C_{T}^{R}  \cup \bigcup_{1\leq  U < T/4} C_T^{U},
\]
where for $R>4M$ , $U > 1$ we set
\[
 C_{T}^{R} = C_T \cap \{ R < r < 2R \},
\qquad
C_{T}^{U} = C_T \cap \{ U < t-r < 2U\}
\]
while for $R=4M$ and $U= 1$ we have
\[
 C_{T}^{R=4M} = C_T \cap \{ r_e < r < 4M \},
\qquad
C_{T}^{U=1} = C_T \cap \{ 0 < t-r < 2\}.
\]
  By $\tilde C_{T}^{R}$
and $\tilde C_{T}^{U}$ we denote enlargements of these sets in both
space and time on their respective scales. We also define
\[ 
C_{T}^{<T/2} = \bigcup_{R < T/4} C_T^R.
\]
and $\tilde C_{T}^{<T/2}$ an enlargement in space-time on its scale.

We will use Propositions 3.15 and 3.16 from \cite{MTT}, stated below. The role of Proposition 3.15 is twofold: it will allow us to obtain a better bound for the derivative in the region $r<t/2$, and improve the decay estimate of the function in the interior region. The role of Proposition 3.16 is to obtain a better bound for the derivative in the region $r>t/2$.

\begin{proposition}
Assume that $Pu = f$. We have for any $m$ and large enough (but $m$-independent) $n$ that
\begin{equation}
\begin{split}
\| u_{\leq m} \|_{L^\infty ( C_{T}^{<T/2})}
& \  +\| \la r \ra \nabla u_{\leq m} \|_{L^\infty ( C_{T}^{<T/2})}  \lesssim 
T^{-\frac32} \|  u_{\leq m+n} \|_{LE( \tilde C_{T}^{<T/2})} 
\\ & \ + T^{-\frac12} \Bigl(\|f_{\leq m+n}\|_{LE^*( \tilde C_{T}^{<T/2})}+ 
\|\la r \ra^2 \nabla f_{\leq m+n}\|_{LE( \tilde C_{T}^{<T/2})}\Bigr).
\end{split}
\label{l2toli}\end{equation}
\label{p:pointr}
\end{proposition}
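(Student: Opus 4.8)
The statement is Proposition 3.15 of \cite{MTT}, so the plan is to check that the argument there applies in our setting and to recall its mechanism. The operator $P$ differs from the one in \cite{MTT} only by the extra term $P_{sr}$ of \eqref{Psr}; since $P_{sr}$ is short range (divergence form, $S^Z_{der}(r^{-2})$ coefficients) it is no worse than the long-range part $P_{lr}$ already treated there, so it is carried along verbatim, and the loss of finitely many vector fields is what forces $n$ large (but $m$-independent). First I would reduce to a single dyadic piece: since the $LE$ and $LE^*$ norms are a supremum, respectively a sum, over the dyadic annuli $A_R$, it suffices to prove \eqref{l2toli} with $C_T^{<T/2}$, $\tilde C_T^{<T/2}$ replaced by $C_T^R$, $\tilde C_T^R$ uniformly in $R\le T/4$, and then combine. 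On the piece near the horizon ($R=4M$) I would not use any flat representation: there the bound follows from the local energy estimate \eqref{wle} on the fixed compact region together with the interior identity $\pa_{\tt}\sim\tt^{-1}\big(S-\rs\,\pa_{\rs}\big)$, which converts each $\pa_{\tt}$-derivative into a factor $\sim T^{-1}$ times a vector-field derivative; pigeonholing in $\tt$ then produces the $T^{-3/2}$.

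For a piece $C_T^R$ with $R>4M$ I would fix $(t_0,x_0)$ with $|x_0|\sim R$, $t_0\sim T$, move the non-flat part of $P$ to the right-hand side by writing $\Box u = f - (P_{lr}+P_{sr})u$ with $|(P_{lr}+P_{sr})u|\lesssim\langle r\rangle^{-2}u_{\le n}$, and absorb these terms (iterating in $n$ near small $R$, exactly as in \cite{MTT}), so that I may assume $\Box u=f$. Next I would choose a time $t_1$ with $t_0-t_1\sim T$ for which the solid backward light cone $\{(s,y):t_1\le s\le t_0,\ |y-x_0|\le t_0-s\}$ lies inside $\tilde C_T^R$ and stays away from the cone $\{s=|y|\}$; the key point is that $t_1$ is then free over an interval of length $\sim T$. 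Splitting $u(t_0,x_0)=u_{\mathrm{hom}}+u_{\mathrm{Duh}}$, with $u_{\mathrm{hom}}$ solving the homogeneous equation with Cauchy data $(u,\pa_t u)|_{t=t_1}$ cut off near the relevant sphere and $u_{\mathrm{Duh}}$ carrying $f$ on the solid cone, I would estimate the two pieces separately.

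For $u_{\mathrm{hom}}$, strong Huygens in $3+1$ dimensions localizes the dependence to the single sphere $\{|y-x_0|=t_0-t_1\}$ of radius $\tau\sim T$. On that sphere I would (i) trade every $\pa u(t_1)$ for $\tau^{-1}u_{\le1}(t_1)\sim T^{-1}u_{\le1}(t_1)$ via $S$ and $\Omega$; (ii) apply Cauchy--Schwarz over the sphere, losing $(\mathrm{area})^{1/2}\sim T$; (iii) use a trace inequality from the sphere to a $3$-dimensional shell, gaining $T^{-1/2}$ at the cost of a $\nabla u$ term that is again converted as in (i); and (iv) pigeonhole over $t_1$ in its interval of length $\sim T$, gaining a further $T^{-1/2}$. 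Collecting powers of $T$ should give $|u_{\mathrm{hom}}(t_0,x_0)|\lesssim T^{-2}\|u_{\le n}\|_{L^2(\tilde C_T^R)}$, and then $\|u_{\le n}\|_{L^2(\tilde C_T^R)}\lesssim T^{1/2}\|u_{\le n}\|_{LE(\tilde C_T^R)}$ (valid since $\langle r\rangle\lesssim T$ on that set) yields the claimed $T^{-3/2}\|u_{\le n}\|_{LE}$. The left-hand term $\langle r\rangle\nabla u_{\le m}$ I would handle identically after commuting $\nabla$ through, the extra factor $\langle r\rangle^{-1}\sim R^{-1}$ being produced by the cutoff commutators together with the elliptic gain in $r$.

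For $u_{\mathrm{Duh}}$ I would estimate the Duhamel integral with the Kirchhoff kernel $(t_0-s)^{-1}$ over the solid past cone intersected with $\tilde C_T^R$, Cauchy--Schwarz over the truncated spheres, and a co-area/trace inequality to pass from that one-parameter family of spheres to the four-dimensional integral --- this is precisely what forces the appearance of $\nabla f$, with the weight $\langle r\rangle^2$ tracking the radius --- arriving at $T^{-1/2}\big(\|f_{\le n}\|_{LE^*}+\|\langle r\rangle^2\nabla f_{\le n}\|_{LE}\big)$. Summing over the dyadic pieces $C_T^R$ then gives \eqref{l2toli}. The step I expect to be most delicate is extracting the \emph{full} $T^{-3/2}$ rather than only $T^{-1/2}$ in the homogeneous part: this rests on the freedom to take $t_0-t_1\sim T$ (so that Huygens places the data on a sphere of radius $\sim T$ and the pigeonhole interval has length $\sim T$) together with the interior identity $\pa_{\tt}\sim\tt^{-1}(S-\rs\,\pa_{\rs})$ that trades each derivative on the Cauchy data for a power of $T^{-1}$; everything else is weight bookkeeping, as in \cite{MTT}.
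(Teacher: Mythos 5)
The paper does not actually prove this proposition: it quotes it verbatim from \cite{MTT} (Proposition~3.15 there) and uses it as a black box, so the only meaningful comparison is with the argument in that reference, which you correctly identify. Your reconstruction, however, is not the mechanism used in \cite{MTT} for the interior estimates. There the proof is an elementary rescaled Sobolev/elliptic argument confined to a single spacetime dyadic box $\tilde C_T^R$: Sobolev embedding on the annulus $A_R$ together with a pigeonhole in the time interval of length $\sim T$, the replacement of every $\partial_{\tt}$ acting on $u$ by $\tt^{-1}(S-\rs\,\partial_{\rs})$ in the interior, and an elliptic interior estimate on $A_R$ (using $\Delta u = \partial_{\tt}^2 u - f + \cdots$) to produce the extra factor of $\la r\ra^{-1}$ in front of $\nabla u_{\le m}$ on the left. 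No fundamental-solution or Huygens representation appears; that technology is reserved in \cite{MTT} (and in Sections~6--7 of this paper) for the near-cone region where $r$ is comparable to $\tt$.

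The Kirchhoff/Huygens route you propose is also internally inconsistent with the dyadic reduction you begin from. You reduce to a fixed shell $C_T^R$ and then place Cauchy data at a time $t_1$ with $t_0-t_1\sim T$; but the resulting backward light cone from $(t_0,x_0)$, $|x_0|\sim R$, has spatial radius $\sim T\gg R$ and therefore sweeps across all annuli up to $r\sim T$, so it does \emph{not} lie inside $\tilde C_T^R$ as you assert, and the data sphere in step (i)--(iv) sits on $A_{\sim T}$ rather than $A_R$. On that sphere $r/\tt\sim 1$, so the conversion $\partial_{\tt}\sim\tt^{-1}(S-\rs\,\partial_{\rs})$ does not actually gain the factor $T^{-1}$ you invoke; one can shrink $t_0-t_1$ to $cT$ with $c$ small to keep the sphere inside $\{r\le \tt/2\}$, but the sphere still lives at radius $\sim T$, not $\sim R$, and the $\la r\ra^{-1}\sim R^{-1}$ needed in front of $\nabla u_{\le m}$ cannot come from the Kirchhoff kernel, which only sees the large radius. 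That extra factor is precisely what the elliptic estimate in \cite{MTT} supplies, and the phrase ``cutoff commutators together with the elliptic gain in $r$'' is not a substitute without an actual computation. You should instead carry out the Sobolev/pigeonhole/elliptic argument directly on $\tilde C_T^R$, treating $P_{lr}$ and $P_{sr}$ as lower-order terms on the right as you correctly anticipated.
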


\begin{proposition}
We have
\begin{equation}
\begin{split}
U \| \nabla u_{\leq m} \|_{L^\infty ( C_{T}^{U})}
  \lesssim & \  \|  u_{\leq m+n} \|_{L^\infty( \tilde C_{T}^{U})} + 
 T^{-\frac12} U^{\frac12}  \| f_{\leq m+n}\|_{L^2( \tilde C_{T}^{U})}\\ & \ + 
T^{-\frac12} U^{\frac32}  \| \nabla f_{\leq m+n}\|_{L^2( \tilde C_{T}^{U})}.
\end{split}
\label{l2toli:u}\end{equation}
\label{p:pointu}
\end{proposition}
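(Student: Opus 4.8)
The plan is to prove this as a local space-time derivative estimate near the light cone, at the dyadic scale $U=\la t-r\ra$, following the rescaling method of \cite{MTT}. Fix a point $(t_0,x_0)\in C_T^U$. Since $U<T/4$ forces $r\approx t\approx T$ throughout $\tilde C_T^U$, the operator $P$ there is, by \eqref{Pdec}, \eqref{Plr}, \eqref{Psr}, the flat wave operator $\Box$ plus long- and short-range pieces whose coefficients and all of whose $Z$-derivatives are $O(r^{-2})=O(T^{-2})$. I would rescale by $U$: set $v(s,y)=u(t_0+Us,\,x_0+Uy)$ and $g(s,y)=U^2 f(t_0+Us,\,x_0+Uy)$ on the unit cylinder $B$, so that $P_1 v=g$ with $P_1=\Box+(\text{perturbation})$, the perturbation and its derivatives being $O(U^2/T^2)$ and hence small; thus $P_1$ is a uniformly regular, mild perturbation of $\Box$ on $B$. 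The powers of $U$ and $T$ in \eqref{l2toli:u} are then dictated by scale invariance: using $|\tilde C_T^U|\sim UT^3$ and the fact that $\nabla$ in the rescaled variables is $U$ times a physical derivative, one checks that the factor $U$ on the left and the weights $T^{-1/2}U^{1/2}$, $T^{-1/2}U^{3/2}$ on the right are exactly those making the two sides homogeneous of the same degree.

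The content then reduces to a scale-one estimate for $P_1\approx\Box$ on $B$, namely
\[
\|\nabla v\|_{L^\infty(B_{1/2})}\ \lesssim\ \|v_{\leq n}\|_{L^\infty(B)}+\|g_{\leq n}\|_{L^2(B)}+\|\nabla g_{\leq n}\|_{L^2(B)},
\]
with $n$ fixed but large, which I would prove by commuting $P_1 v=g$ with up to $n$ coordinate vector fields (the commutators against the small perturbation are lower order and get absorbed on the left), applying the energy inequality for the wave operator on $B$ — whose data on a time slice and whose lateral boundary terms are dominated by $\|v_{\leq n}\|_{L^\infty(B)}$ once $n$ is large, since $\tilde C_T^U$ strictly contains $C_T^U$ so these slices lie in the region where the right side is measured — and concluding with the Sobolev embedding $H^s(B_{1/2})\hookrightarrow L^\infty$ in $\R^{1+3}$, valid for $s$ slightly above $2$. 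The $n$ extra derivatives are needed precisely because the wave equation gives no free interior gain of regularity, so all the regularity of $v$ used for Sobolev must be produced from the commuted equations and the energy estimate. On unwinding the rescaling, the coordinate derivatives in $v_{\leq n}$, $g_{\leq n}$ become $Z$'s because on $\tilde C_T^U$ the rotations $\Omega$ and the scaling $S$ are comparable to $T\geq U$ times a coordinate derivative; this is also why the weight $U$ on the left is affordable, since near the cone $U\,\partial$ is controlled by the scaling field (for the transverse derivative) and by the rotations or $T^{-1}S$-type quantities (for the tangential ones), and $S,\Omega$ are among the fields counted in $u_{\leq m+n}$. Taking the supremum over $(t_0,x_0)\in C_T^U$ yields \eqref{l2toli:u}.

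The step I expect to be the main obstacle is making the scale-one estimate actually close: because there is no local elliptic regularity for $\Box$, one must check that commuting with $n$ vector fields together with the energy inequality really controls $\|\nabla v\|_{H^{2+}(B_{1/2})}$ by $\|v_{\leq n}\|_{L^\infty(B)}$ plus the two $L^2$ norms of $g$, and in particular that the boundary contributions of the energy estimate are genuinely of lower order than the right-hand side — which is exactly what the strict enlargement $\tilde C_T^U\supsetneq C_T^U$ and a sufficiently large $n$ are for. The reduction of $P$ to $\Box$ plus a subcritical perturbation and the tracking of the $U$, $T$ weights are then routine.
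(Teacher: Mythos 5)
Your rescaling to an isotropic $U$-ball does not prove the stated inequality, for two separate reasons.

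\textbf{The weights are not fixed by scale invariance.} Under a uniform rescaling $(t,x)\mapsto(\lambda t,\lambda x)$ one has $T\mapsto\lambda T$, $U\mapsto\lambda U$, and every term in \eqref{l2toli:u} is invariant — but so would it be with $T^{-a}U^{a}$ in place of $T^{-1/2}U^{1/2}$ for any $a$. Scale invariance fixes only the total homogeneity, not the split between $T$ and $U$. The specific powers encode the anisotropy of $C_T^U$ (a slab of size roughly $T\times T\times T\times U$), and your isotropic $U$-ball loses exactly that information: after your rescaling the $L^2$ norm of $g$ over the unit ball is, undoing the scaling, the $L^2$ norm of $f$ over a ball of radius $\sim U$. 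Dominating that by $\|f\|_{L^2(\tilde C_T^U)}$ gives the estimate with coefficient $1$ in place of $T^{-1/2}U^{1/2}<1$, which is strictly weaker and not what the proposition asserts.

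\textbf{The coordinate derivatives after rescaling are not controlled by $Z$.} After rescaling, a coordinate derivative on the unit ball corresponds to $U\partial$ in the physical variables, and you assert that near the cone this is controlled by $\Omega$, $S$, or $T^{-1}S$. This is false. In null coordinates $u=\tt-\rs$, $v=\tt+\rs$ one has $S=u\partial_u+v\partial_v$ with $u\sim U$ and $v\sim T$; $S$ controls the specific combination $U\partial_u + T\partial_v$ but not $U\partial_u$ or $U\partial_v$ individually, and $\Omega$ controls only the angular derivatives. Since $Z=\{\partial,\Omega,S\}$ contains no Lorentz boosts, the quantity $\la\tt-\rs\ra\,\partial u$ is not pointwise bounded by $|u_{\leq m+n}|$ near the cone. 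Consequently your proposed scale-one inequality $\|\nabla v\|_{L^\infty(B_{1/2})}\lesssim\|v_{\leq n}\|_{L^\infty(B)}+\cdots$ (with coordinate derivatives) is trivially true — its right-hand side already contains $\|\nabla v\|_{L^\infty}$ — but it unwinds to $U\|\nabla u\|\lesssim U\|\nabla u\|+\cdots$, not to \eqref{l2toli:u}. The gain of the factor $U$ on the left is precisely the nontrivial content of the proposition and it cannot be obtained from either local elliptic-type regularity (which $\Box$ does not have) or from domination of $U\partial$ by $Z$ (which fails).

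What is actually required, and what I believe the proof in \cite{MTT} does, is to use the equation in an essential, directional way: reduce to the $1$D operator via $(\partial_t^2-\partial_{\rs}^2)(\rs u)=\rs\,\Box u+\rs^{-1}\Delta_\omega u+\cdots$, integrate $\partial_u\partial_v(\rs u)$ in the transverse null variable $u$ over the width-$U$ slab to control the good derivative $\partial_v u$, recover $\partial_u u$ from $S$ and $\partial_v u$, and produce the $L^\infty$ bound by Sobolev embedding separately on $\S^2$ (giving the $T$-weight) and on the $(u,v)$ rectangle of dimensions $U\times T$ (giving the $U$ and $T^{-1/2}U^{1/2}$ weights). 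The key structural point your argument misses is that it is the one-dimensional reduction and the integration along a null direction that buy the factor $U$, not a local $L^2\to L^\infty$ estimate on a ball.
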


As a quick corollary of \eqref{l2toli}, assume that $\phi$ is the solution to \eqref{nlw}, and moreover satisfies
\[
|\phi_{\leq m+n+1}| \lesssim \frac{1}{\la r\ra\la \tt-\rs\ra^{\gamma}}, \quad \gamma > 0.
\]
Let $\ga_1 = \min\{\ga, p\ga - 1\}$. We then we obtain the improved pointwise bound in $C_{T}^{<T/2}$
\beq\label{rtot}
|\phi_{\leq m}| + | \la r \ra \nabla \phi_{\leq m}|\lesssim \frac{1}{\la \tt\ra\la \tt-\rs\ra^{\ga_1}},
\eq
and the improved bound in $C_{T}^{U}$
\beq\label{dercone}
 |\nabla \phi_{\leq m}| \lesssim \frac{1}{\la \tt\ra\la \tt-\rs\ra^{1+\gamma_1}}.
\eq

Indeed, it is easy to check that
\[
\| r^{-1/2} \phi_{\leq m+n}\|_{L^2(\tilde C_T^R)} \lesssim T^{1/2-\gamma}.
\]
Moreover, using the fact that in $\tilde C_{T}^{<T/2}$ we have
\[
\phi^p_{\leq m+n+1} \lesssim \frac{1}{T^{p\gamma}\la r\ra^p}
\]
and furthermore using that $p\geq 3$:
\[
\|(\phi^p)_{\leq m+n}\|_{LE^*( \tilde C_{T}^{<T/2})} + \|\la r \ra^2 \nabla ((\phi^p)_{\leq m+n})\|_{LE( \tilde C_{T}^{<T/2})} \lesssim T^{\frac12-p\gamma}.
\]
This finishes the proof of \eqref{rtot}.

In order to prove \eqref{dercone}, we see that, since in $\tilde C_{T}^{U}$ we have
\[
\phi^p_{\leq m+n+1} \lesssim \frac{1}{U^{p\gamma} T^p}.
\]
Since $U\lesssim T$ and $p\geq 3$, we obtain:
\[
 T^{-\frac12} U^{\frac12}  \| (\phi^p)_{\leq m+n}\|_{L^2( \tilde C_{T}^{U})} + 
T^{-\frac12} U^{\frac32}  \| \nabla (\phi^p)_{\leq m+n}\|_{L^2( \tilde C_{T}^{U})} \lesssim T^{1-p} U^{2-p\ga} \lesssim T^{-1} U^{4-p-p\ga} \lesssim T^{-1} U^{1-p\ga}. 
\]

\medskip

\newsection{The bootstrap argument}

We will now finish the proof. We assume that the bound \eqref{ptwsein} holds for some $q$ close to $\frac12$, and improve it to \eqref{mainbd}.

 Note first that \eqref{ptwsein} already gives the desired rate near the cone, when $\tt-\rs\approx 1$, and thus we can assume from now on that $\tt-\rs > 1$.

Let $\chi_0$ be a smooth cutoff supported in the region $\frac{\tt}2 \leq \tt-\rs \leq 2\tt$ and identically one when $\frac{3\tt}4 \leq \tt-\rs \leq \tt$ . We now write $G_{\alpha}$ in the form
\[
G_{\alpha} =  g_1 \phi_{\leq |\alpha|+6} +  \partial_t (g_2 \phi_{< |\alpha|+6}), 
\qquad g_1 \in S^Z(r^{-3}), \ \ g_2 \in S^Z_{der}(r^{-2}).
\]
Here we can confine ourselves to $\partial_t$ derivatives in the last term
because for any $S$ and $\Omega$ component we gain a factor of $r^{-1}$
and include it in the first term.
We now split  $G_{\alpha}$ into two parts, so that later on we can apply Lemma~\ref{Minkdcy}:
\[
G_{\alpha}= G_{\alpha}^1+  \partial_t G_{\alpha}^2,
\]
with
\[
G_{\alpha}^1 = g_1 \phi_{\leq |\alpha|+6} +  \partial_t  \left((1-\chi_0) g_2 \phi_{< |\alpha|+6}\right),
\quad G_{\alpha}^2 =  \chi_0 g_2 \phi_{< |\alpha|+6}.
\]

Let $q_1 = \min\{q, pq - 1\}$; more precisely, $q_1=q$ when $p\geq 4$, and $q_1=3q-1$ when $p=3$. Using \eqref{rtot} and \eqref{dercone}, we see that
\beq\label{ptdecayu2}
|\phi_{\leq m+n}| + | \la r \ra \nabla \phi_{\leq m+n}|\lesssim \frac{1}{\la \tt\ra\la \tt-\rs\ra^{q_1}},
\eq
\[
|\nabla \phi_{\leq m+n}| \lesssim \frac{1}{\la \tt\ra\la \tt-\rs\ra^{1+q_1}}.
\]
In particular, we have, for $|\alpha|\leq m+n-12$
\beq\label{G12est}\begin{split}
& \| G_{\alpha}^1 \|_{L_{3,1, q_1}^{\infty}} \lesssim \|\la\tt\ra \la\tt-\rs\ra^{q_1} \phi_{\leq |\alpha|+12}\|_{L^\infty L^\infty} \lesssim \varepsilon \\ &
\|G_{\alpha}^2\|_{L_{2,1, q_1}^{\infty}} + \|SG_{\alpha}^2\|_{L_{2,1, q_1}^{\infty}} + \|\Omega G_{\alpha}^2\|_{L_{2,1,q_1}^{\infty}}+ \|\la \tt-\rs\ra \pa G_{\alpha}^2\|_{L_{2,1,q_1}^{\infty}} \lesssim \|\la\tt\ra \la\tt-\rs\ra^{q_1} \phi_{\leq |\alpha|+12}\|_{L^\infty L^\infty} \lesssim \varepsilon.
\end{split}\eq

Consider first the case $p=3$. We note that, due to \eqref{ptdecayu2}
\[
F_{\alpha}\lesssim \frac{\varepsilon}{\la \tt\ra^3 \la \tt-\rs\ra^{3q_1}} \lesssim \frac{\varepsilon}{\la r\ra^2 \la \tt\ra \la \tt-\rs\ra^{3q_1}}.
\]
Lemma~\ref{Minkdcy} (with $\beta=2$, $\gamma=1$ and $\eta = 3q_1$) yields, provided that we pick $q_1>1/3$ (which requires $q>4/9$):  
\beq\label{p3phi2}
\psi_2 \lesssim \frac{\varepsilon}{\la r\ra \la \tt-\rs\ra}.
\eq
On the other hand, due to \eqref{G12est} and \eqref{ptdecayu2}, we can apply Lemma~\ref{Minkdcy} with $\beta=3$, $\gamma=1$ and $\eta = q_1$. We obtain
\beq\label{p3phi3}
\psi_1 \lesssim \frac{\varepsilon}{\la r\ra \la \tt-\rs\ra^{1+q_1-\delta}} \lesssim \frac{\varepsilon}{\la r\ra \la \tt-\rs\ra}.
\eq

We get by \eqref{ptdecayu2}, \eqref{p3phi2} and \eqref{p3phi3} that
\[
\phi_{\alpha} \lesssim \frac{\varepsilon}{\la r\ra \la \tt-\rs\ra}.
\]

Using \eqref{rtot} we can turn the $r$ into a $t$ for $|\alpha|\leq m$:
\beq\label{p3final}
\phi_{\alpha} \lesssim \frac{\varepsilon}{\la \tt\ra \la \tt-\rs\ra}.
\eq
which finishes the proof when $p=3$. We remark that it is the nonlinearity that dictates the rate of the decay in this case.

Assume now that $p\geq 4$. In this case, \eqref{ptdecayu2} and \eqref{G12est} imply
\beq\label{FGq}\begin{split}
& F_{\alpha} \lesssim \frac{\varepsilon}{\la \tt\ra^p \la \tt-\rs\ra^{pq}} \lesssim \frac{\varepsilon}{\la r\ra^3 \la \tt\ra \la \tt-\rs\ra^q}, \qquad \| G_{\alpha}^1 \|_{L_{3, 1, q}^{\infty}} \lesssim \varepsilon \\ & \|G_{\alpha}^2\|_{L_{2,1, q}^{\infty}} + \|SG_{\alpha}^2\|_{L_{2,1, q}^{\infty}} + \|\Omega G_{\alpha}^2\|_{L_{2,1, q}^{\infty}}+ \|\la \tt-\rs\ra \pa G_{\alpha}^2|\|_{L_{2,1, q}^{\infty}} \lesssim \varepsilon.
\end{split}\eq

Lemma~\ref{Minkdcy} (with $\beta=3$ and $\eta = q$) yields
\beq\label{phi23}
\psi_2, \psi_3 \lesssim \frac{\varepsilon}{\la r\ra \la \tt-\rs\ra^{1+q}}, \qquad q<1/2.
\eq

We thus obtain by \eqref{ptdecayu2} and \eqref{phi23}
\[
\phi_{\alpha} \lesssim \frac{\varepsilon}{\la r\ra \la \tt-\rs\ra^{1+q}}, \qquad q<1/2,
\]
and by \eqref{rtot} we can turn the $r$ into a $t$:
\beq\label{ptdecayu3}
\phi_{\alpha} \lesssim \frac{\varepsilon}{\la t\ra \la \tt-\rs\ra^{1+q}}, \qquad q<1/2.
\eq

We now iterate one last time. Due to \eqref{ptdecayu2} and \eqref{G12est}, we obtain
\[\begin{split}
& F_{\alpha} \lesssim \frac{\varepsilon}{\la \tt\ra^p \la \tt-\rs\ra^{pq+p}} \lesssim \frac{\varepsilon}{\la r\ra^3 \la \tt\ra \la \tt-\rs\ra^{q+1}}, \qquad \| G_{\alpha}^1 \|_{L_{3, 1, q+1}^{\infty}} \lesssim \varepsilon \\ & \|G_{\alpha}^2\|_{L_{2,1, q+1}^{\infty}} + \|SG_{\alpha}^2\|_{L_{2,1, q+1}^{\infty}} + \|\Omega G_{\alpha}^2\|_{L_{2,1, q+1}^{\infty}}+ \|\la \tt-\rs\ra \pa G_{\alpha}^2|\|_{L_{2,1, q+1}^{\infty}} \lesssim \varepsilon.
\end{split}\]

Lemma~\ref{Minkdcy} (with $\beta=3$ and $\eta = q+1$) yields
\beq\label{phi23bis}
\phi_2, \phi_3 \lesssim \frac{\varepsilon}{\la r\ra \la \tt-\rs\ra^2}.
\eq

We thus obtain by \eqref{ptdecayu3} and \eqref{phi23bis}
\[
\phi_{\alpha} \lesssim \frac{\varepsilon}{\la r\ra \la \tt-\rs\ra^2}, 
\]
and by \eqref{rtot} we can turn the $r$ into a $t$:
\beq\label{ptdecayu4}
\phi_{\alpha} \lesssim \frac{\varepsilon}{\la \tt\ra \la \tt-\rs\ra^2},
\eq
which finishes the proof.

\bigskip


\begin{thebibliography}{MA}

\bibitem{AAG1} Y. Angelopoulos, S. Aretakis, and D. Gajic: \textit{Late-time asymptotics for the wave equation on spherically symmetric stationary backgrounds}, Advances in Mathematics {\bf 323} (2018), 529--621.
\bibitem{AAG2} Y. Angelopoulos, S. Aretakis, and D. Gajic: \textit{Late-time asymptotics for the wave equation on extremal Reissner-Nordstr\"om backgrounds}, \arXiv{1807.03802}.
\bibitem{AAG3} Y. Angelopoulos, S. Aretakis, and D. Gajic: \textit{Late-time tails and mode coupling of linear waves on Kerr spacetimes}, \arXiv{2102.11884}.
\bibitem{As} F. Asakura: \textit{Existence of a global solution to a semi-linear wave equation with slowly decreasing
initial data in three space dimenstions},  Comm. Part. Diff. Eq., {\bf 13} (11) (1986), 1459--1487.
\bibitem{BSh} H. Bahouri and J. Shatah: \textit{Decay estimates for the critical semilinear wave equation}, Ann. Inst. H. Poincare\'{e} Anal. Non Lin\'{e}aire {\bf 15} (1998), 783--789.
\bibitem{BSz} R. Bieli and N. Szpak: \textit{Global pointwise decay estimates for defocusing radial nonlinear wave equations}, Comm. Partial Differential Equations 36 (2011), no. 2, 205--215.
\bibitem{BSt} P. Blue and J. Sterbenz, \textit{Uniform decay of local energy and the semi-linear wave equation on Schwarzschild space}, Comm. Math. Phys. \textbf{268}, (2006), no. 2, 481--504.
\bibitem{DR1} M. Dafermos and I. Rodnianski: \textit{Small-amplitude nonlinear waves on a black hole background}, J. Math. Pures Appl. (9) 84 (2005), no. 9, 1147--1172.
\bibitem{DR} M. Dafermos and I. Rodnianski: \textit{A new physical-space approach to decay for the wave equation with applications to black hole spacetimes}. XVIth International Congress on Mathematical Physics, P. Exner (ed.), World Scientific, London, 2009, pp. 421--433.
\bibitem{DSS} R. Donninger, W. Schlag, and A. Soffer: \textit{On pointwise decay of linear waves on a Schwarzschild black hole background}, Comm. Math. Phys., {\bf 309}, (2012), 51--86.
\bibitem{Gri} M. Grillakis:
\textit{Regularity and asymptotic behavior of the wave equation with a critical nonlinearity}, Ann. of Math. \textbf{132} (1990), 485--509.
\bibitem{HE} S. W. Hawking and G. F. R. Ellis: \textit{The large scale structure of space-time}.  Cambridge Monographs on Mathematical Physics, No. 1. London, New York:  Cambridge University Press 1973.
\bibitem{GLS} V. Georgiev, H. Lindblad, C. Sogge: \textit{Weighted Strichartz estimates and global existence for semilinear wave equations}, Amer. J. Math.  \textbf{119} (1997), no. 6, 1291--1319.
\bibitem{Hin} P. Hintz: \textit{A sharp version of Price's law for wave decay on asymptotically flat spacetimes.} \arXiv{2004.01664}.
\bibitem{John} F. John:  \textit{Blow-up of solutions of nonlinear wave equations in three space dimensions }, Manuscripta Math. \textbf{28} (1979), no. 1-3, 235--268.
\bibitem{Jor} K. J\"orgens: \textit{Das Anfangswertproblem im Grossen fur eine Klasse nichtlinearer Wellengleichungen}, Math. Z. 77 (1961), 295--308
\bibitem{LMSTW} H. Lindblad, J. Metcalfe, C. Sogge, M. Tohaneanu, and C. Wang: \textit{The Strauss conjecture on Kerr black hole backgrounds }, Math. Ann. 359 (2014), no. 3-4, 637--661.
\bibitem{LT} H. Lindblad, M. Tohaneanu: \textit{Global existence for quasilinear wave equations close to Schwarzschild}, Comm. Partial Differential Equations 43 (2018), no. 6, 893--944.
\bibitem{Looi1} S.Z. Looi: \textit{Pointwise decay for the wave equation on nonstationary spacetimes}, \arXiv{2105.02865}.
\bibitem{Looi2} S.Z. Looi: \textit{Pointwise decay for power-type nonlinear wave equations on nonstationary spacetimes}, preprint.
\bibitem{LOh} J. Luk, S.J. Oh: \textit{Quantitative decay rates for dispersive solutions to the Einstein-scalar field system in spherical symmetry}, Anal. PDE 8 (2015), no. 7, 1603--1674.
\bibitem{MMTT} J. Marzuola, J. Metcalfe, D. Tataru, M. Tohaneanu, \textit{Strichartz estimates on Schwarzschild black hole backgrounds}, Comm. Math. Phys. 293 (2010), no. 1, 37--83.
\bibitem{MTT} J. Metcalfe, D. Tataru, M. Tohaneanu: \textit{Price's law on nonstationary space-times}, Adv. Math. 230 (2012), no. 3, 995--1028.
\bibitem{Mor} K. Morgan: \textit{The effect of metric behavior at spatial infinity on pointwise wave decay in the asymptotically flat stationary setting}, \arXiv{2006.11324}.
\bibitem{MW} K. Morgan and J. Wunsch: \textit{Generalized Price's law on fractional-order asymptotically flat stationary spacetimes}, \arXiv{2105.02305}.
\bibitem{Mos} G. Moschidis: \textit{The $r^{p}$-weighted energy method of Dafermos and Rodnianski in general asymptotically flat spacetimes and applications}, Annals of PDE 2:6 (2016), 1--194.
\bibitem{OS} J. Oliver and J. Sterbenz: \textit{A Vector Field Method for Radiating Black Hole Spacetimes}. Analysis and PDE. Vol. 13 (2020) no. 1, 29--92.
\bibitem{Pri} R. Price: \textit{Nonspherical perturbations of relativistic gravitational collapse. I. Scalar and gravitational perturbations}. Phys. Rev. D (3) 5 (1972), 2419--2438.
\bibitem{Pec1} H. Pecher: \textit{Decay of solutions of nonlinear wave equations in three space dimensions}, J. Functional Analysis 46 (1982), no. 2, 221--229.
\bibitem{Pec2} H. Pecher: \textit{Scattering for semilinear wave equations with small data in three space dimensions}, Math. Z. 198 (1988), no. 2, 277--289.
\bibitem{ShStr} J. Shatah and M. Struwe: \textit{Well Posedness in the energy space space for semilinear wave equations with critical growth}, Internat. Math. Res. Notices 1994, no. 7, 303--309. 
\bibitem{St} J. Stogin: \textit{Global Stability of the Nontrivial Solutions to the Wave Map Problem from Kerr $|a|\ll M$ to the Hyperbolic Plane under Axisymmetric Perturbations Preserving Angular Momentum}, \arXiv{1610.03910}.
\bibitem{Str} W. Strauss: \textit{Decay and asymptotics for $\Box u= F(u)$}, J. Functional Analysis {\bf 2} (1968), 409--457.
\bibitem{STz} W. Strauss and K. Tsutaya: \textit{Existence and blow up of small amplitude nonlinear waves with a negative potential}, Discr. Cont. Dynamical Systems, 3(2) (1997), 175--188.
\bibitem{Sz} N. Szpak: \textit{Linear and nonlinear tails. I. General results and perturbation theory} , J. Hyperbolic Differ. Equ. 5 (2008), no. 4, 741--765.
\bibitem{Sz2} N. Szpak: \textit{Weighted-$L^\infty$ and pointwise space-time decay estimates for wave equations with potentials and initial data of low regularity}, \arXiv{0708.1185} .
\bibitem{Ta} D. Tataru: \textit{Strichartz estimates in the hyperbolic space and global existence for the semilinear wave equation}, Trans. Amer. Math. Soc. 353 (2001), no. 2, 795--807
\bibitem{Tat} D. Tataru: \textit{Local decay of waves on asymptotically flat stationary space-times}, Amer. J. Math. 135 (2013), no. 2, 361--401.
\bibitem{TT} D. Tataru and M. Tohaneanu: \textit{Local energy estimate on Kerr black hole backgrounds}, Int. Math. Res. Not. IMRN 2011, no. 2,  248--292.
\bibitem{Yang} S. Yang: \textit{Pointwise decay for semilinear wave equations in $R^{1+3}$}, \arXiv{1908.00607}.

\end{thebibliography}
\end{document}